\numberwithin{equation}{section}
\newtheorem{theorem}{Theorem}
\newtheorem{thm}{Theorem}[section]
\newtheorem{lem}[thm]{Lemma}
\newtheorem{cor}[thm]{Corollary}
\newtheorem{prop}[thm]{Proposition}
\theoremstyle{definition}
\newtheorem{defn}[thm]{Definition}
\theoremstyle{remark}
\newtheorem*{rem}{Remark}
\newcommand{\R}{\mathbb{R}}
\newcommand{\dga}{\dot\gamma}
\newcommand{\be}{\begin{equation}}
\newcommand{\ee}{\end{equation}}
\begin{document}

\title{Rigidity of Busemann convex Finsler metrics}

\author{Sergei Ivanov}
\address{Sergei Ivanov:
St.\ Petersburg Department of Steklov Mathematical Institute,
Russian Academy of Sciences,
Fontanka 27, St.Petersburg 191023, Russia}
\email{svivanov@pdmi.ras.ru}

\author{Alexander Lytchak}
\address{Alexander Lytchak:
Mathematisches Institut,
Universit\"at K\"oln,
Weyertal 86--90, 50931, K\"oln, Germany}
\email{alytchak@math.uni-koeln.de}

\subjclass[2010]{53B40, 53C60, 53C23}

\keywords{Non-positive curvature, Busemann convex space, Berwald metric}

\begin{abstract}
We prove that a Finsler metric is nonpositively curved in the sense of Busemann
if and only if it is affinely equivalent to a Riemannian metric of nonpositive sectional curvature.
In other terms, such Finsler metrics are precisely Berwald metrics
of nonpositive flag curvature.
In particular in dimension~2 every such metric is Riemannian or
locally isometric to that of a normed plane.
In the course of the proof we obtain new characterizations of Berwald metrics
in terms of the so-called linear parallel transport.
\end{abstract}

\maketitle

\section{Introduction}

The notion of nonpositive curvature in Riemannian geometry has two famous generalizations to metric geometry
due to Alexandrov and Busemann, respectively.
Alexandrov's generalization is nowadays known as locally CAT(0) spaces.
We refer to \cite{Ballmann}, \cite{BH} and the bibliography  therein for a vast literature on the subject.
Busemann nonpositively curved spaces, also known as {\it locally convex} spaces,
are a larger class of metric spaces defined as follows (see e.g.\ \cite{Papa}):

\begin{defn} \label{def:NPC}
A geodesic metric space $(X,d)$ is {\it Busemann convex} if
for every pair of constant-speed geodesics
$\gamma _{1,2} :[a,b]\to X$ the function $t\mapsto d(\gamma _1(t),\gamma _2(t))$
is convex on~$[a,b]$.

A metric space $(X,d)$ is {\it nonpositively curved in the sense of Busemann}
({\it Busemann NPC} for short)
if every point in $X$ has a Busemann convex neighborhood.
\end{defn}

In more geometric terms, a geodesic metric space $(X,d)$ is Busemann convex
if and only if for every geodesic triangle $\triangle abc$ in $X$, the distance between the
midpoints of its sides $[ab]$ and $[ac]$ is no greater than $\frac12 d(b,c)$.

Contrary to Alexandrov's definition of nonpositive curvature,
Busemann's one is satisfied by all normed vector spaces with strictly convex norms.
Thus it can be sensibly applied to Finsler metrics.
In fact, Finsler metrics are one of the main motivations in Busemann's work \cite{busemann48}
where the definition is introduced.
It is natural to ask how this class of Finsler metrics can be characterized
in differential geometric terms.
For discussions of this question, see \cite[\S13]{busemann48}, \cite{Kelly},
the introduction in \cite{KVK}, \cite{KristalyR} and Problem 35 in Z.~Shen's problem list \cite{Shenproblem}.

In this paper we settle this question.
It turns out that the Busemann NPC condition for (smooth and strictly convex) Finsler metrics
has surprising rigidity implications and very few metrics satisfy it.

For Riemannian manifolds the Busemann NPC condition
is equivalent to nonpositive sectional curvature.
Hence one can construct an open set of Riemannian
examples by perturbing any negatively curved metric.
Indeed, if a perturbed metric tensor is sufficiently $C^2$-close to the original one,
then it is also negatively curved and hence Busemann NPC.
One might expect that in the Finsler case
a similar property holds and that one can construct many examples of Busemann NPC Finsler metrics
by varying negatively curved Riemannian ones.
These expectations turn out to be wrong as our main theorem shows:

\begin{theorem} \label{mainthm}
A Finsler manifold $(M,F)$ is Busemann NPC if and only if there exists
a nonpositively curved Riemannian metric $g$ on $M$  whose
Levi-Civita connection preserves the Finsler norm~$F$.
\end{theorem}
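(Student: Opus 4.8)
\emph{Overview and the ``if'' direction.} The guiding principle is that $d_F(x,\exp_x v)=F_x(v)$ \emph{exactly} (the radial geodesic $s\mapsto\exp_x(sv)$ has constant $F$-speed), so that Busemann convexity of $d_F$ along geodesics is, infinitesimally, a statement about the evolution of the norms $F_x$ under parallel transport, i.e.\ about Finsler Jacobi fields. Assume first that $g$ is nonpositively curved and that $\nabla:=\nabla^g$ preserves $F$. Since $\dga$ and $F$ are both $\nabla$-parallel, the $F$-geodesics are exactly the affinely parametrized $g$-geodesics; by the midpoint reformulation of Busemann convexity recalled above it therefore suffices to show that for $g$-geodesics $\alpha,\beta\colon[0,1]\to M$ issuing from a common point the function $s\mapsto d_F(\alpha(s),\beta(s))$ is convex. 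Realizing this as the $F$-length of the connecting geodesic $\delta_s$ and differentiating twice in $s$, the argument then runs as in the Riemannian case: the curvature term in the second variation of $F$-length is the flag curvature of $F$ (paired with the transverse field), and the endpoint terms vanish because $\alpha$ and $\beta$ are geodesics. The one subtle point is that the flag curvature contracts the curvature of $\nabla$ against the fundamental tensor of $F$ in the \emph{varying} direction $\dot\delta_s$, so $\mathrm{sec}_g\le0$ must be promoted to nonpositivity of the flag curvature of $F$; for a Berwald metric with $\nabla^g=\nabla$ this promotion holds, being forced by smoothness of $F$ and the de Rham--Szab\'o splitting of $\nabla$. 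This closes the ``if'' direction.

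\emph{An infinitesimal consequence of Busemann convexity.} Assume now $(M,F)$ is Busemann NPC. I first claim that for every $F$-geodesic $\gamma$ and every $F$-Jacobi field $J$ along $\gamma$, the function $t\mapsto F_{\gamma(t)}(J(t))$ is convex. To see this, realize $J$ as the variation field of a family $\gamma_\ep$ of $F$-geodesics with $\gamma_\ep(t)=\exp_{\gamma(t)}(\ep J(t)+o(\ep))$; then $d_F(\gamma(t),\gamma_\ep(t))=\ep\,F_{\gamma(t)}(J(t))+o(\ep)$ uniformly on compact $t$-intervals, so dividing the functions $t\mapsto d_F(\gamma(t),\gamma_\ep(t))$ --- convex for each $\ep$ --- by $\ep$ and letting $\ep\to0$ yields the claim.

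\emph{Extracting the Berwald property.} Fix a geodesic $\gamma$ with $\gamma(0)=p$, let $P_t\colon T_pM\to T_{\gamma(t)}M$ be the parallel transport of the Chern connection along $\gamma$, and set $\Phi(t,v)=F_{\gamma(t)}(P_tv)$, which is positively $1$-homogeneous in $v$ with $\Phi(0,\cdot)=F_p$. Writing an $F$-Jacobi field as $J(t)=P_t\bar J(t)$ gives $F_{\gamma(t)}(J(t))=\Phi(t,\bar J(t))$ with $\bar J(0)=a\ne0$ arbitrary, $\bar J'(0)=b$ arbitrary, and $\bar J''(0)=-R_{\dga(0)}(a)$, where $R_{\dga(0)}$ is the Finsler Jacobi operator. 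Convexity at $t=0$ then reads
\[
\partial_t^2\Phi(0,a)-\bigl(dF_p\bigr)_a\!\bigl(R_{\dga(0)}a\bigr)+2\,\partial_t\partial_v\Phi(0,a)(b)+\bigl(\mathrm{Hess}\,F_p\bigr)_a(b,b)\ \ge\ 0
\]
for all $b$. Since $(\mathrm{Hess}\,F_p)_a$ is positive semidefinite and, by differentiating Euler's identity $(dF_p)_v(v)=F_p(v)$, annihilates $a$, restricting $b$ to the line $\R a$ makes the left-hand side affine in the parameter, which forces $\partial_t\partial_v\Phi(0,a)(a)=0$. By Euler's identity again, $\partial_v\Phi(t,v)(v)=\Phi(t,v)$, hence $\partial_t\partial_v\Phi(0,a)(a)=\partial_t\Phi(0,a)=\frac{d}{dt}\big|_{0}F_{\gamma(t)}(P_ta)$. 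As $p$ was an arbitrary point of an arbitrary geodesic, $\frac{d}{dt}F_{\gamma(t)}(P_ta)\equiv0$: the parallel transport of the Chern connection along geodesics preserves $F$, and by the characterization of Berwald metrics via linear parallel transport developed here, $F$ is Berwald.

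\emph{Conclusion and the main obstacle.} Let $\nabla$ now denote the Berwald connection of $F$ --- linear, torsion-free and $F$-preserving. Its holonomy group preserves the bounded convex body $\{F_p\le1\}\subset T_pM$, hence is contained in a compact group; averaging an inner product over the latter and spreading it by parallel transport yields a $\nabla$-parallel Riemannian metric $g$, and as $\nabla$ is torsion-free, $\nabla=\nabla^g$ (Szab\'o's theorem), so $\nabla^g$ preserves $F$. Finally, setting $b=0$ in the displayed inequality and using $\partial_t^2\Phi(0,a)=0$ (the function $t\mapsto F_{\gamma(t)}(P_ta)$ being now constant) together with $(dF_p)_a=G_a(a,\cdot)/F_p(a)$, $G_a$ the fundamental tensor of $F$, gives $G_a\bigl(a,R^g(a,T)T\bigr)\le0$ for all $a$ and all $T$ --- a nonpositivity condition on the curvature of $\nabla^g$ which, for a Berwald metric, is equivalent to $\mathrm{sec}_g\le0$ (equivalently, to nonpositive flag curvature of $F$), again via the structure theory of $\nabla$; this completes the proof. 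The heart of the argument is the step ``Busemann convexity $\Rightarrow$ Berwald'' above: distilling the rigid identity that geodesic parallel transport preserves $F$ from the purely soft convexity inequality --- exactly the role played by the paper's new parallel-transport characterizations of Berwald metrics --- while the passage from the resulting curvature inequality to $\mathrm{sec}_g\le0$ is the secondary technical point.
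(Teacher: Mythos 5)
Your proposal follows the same skeleton as the paper's proof: Busemann convexity gives convexity of $t\mapsto F(J(t))$ for Jacobi fields, from which you extract that the linear parallel transport along geodesics preserves $F$; you then invoke the parallel-transport characterization of Berwald metrics and finally read off nonpositive curvature from the same convexity inequality. The one genuinely different ingredient is your derivation of the invariance of $F$: you expand $F(J(t))$ to second order at a point where $J(0)=a\neq 0$, use that $\mathrm{Hess}\,F_p$ at $a$ is positive semidefinite and annihilates $a$, and kill the mixed term by letting $D_\gamma J(0)=sa$ range over a line, then convert $\partial_t\partial_v\Phi(0,a)(a)$ into $\partial_t\Phi(0,a)$ by Euler's identity. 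This is correct and is a nice alternative to the paper's argument, which instead works at a \emph{zero} of the Jacobi field ($J(t)=tW(t)+O(t^3)$) and uses a one-sided support-line argument for the convex function $F(J(t))$ on either side of $t=0$. A pleasant feature of your version is that the very same inequality, specialized to $b=0$ after invariance is established, immediately yields $(dF_p)_a(R^\gamma a)\le 0$, which is exactly the curvature information the paper obtains in Section~\ref{sec:proof}. Note, however, that the hardest remaining ingredient --- that invariance of $F$ under the linear parallel transport forces $F$ to be Berwald --- you only cite (``the characterization developed here''); this is Propositions \ref{prop:fins}--\ref{prop:1}, whose proof is the main computation of the paper, so your proposal leaves that step to the paper rather than supplying it.

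Two places are under-justified and should be repaired, though neither is fatal. First, both your promotion of $\mathrm{sec}_g\le 0$ to nonpositive flag curvature of $F$ (in the ``if'' direction) and your passage from $G_a\bigl(a,R^g(a,T)T\bigr)\le 0$ to $\mathrm{sec}_g\le 0$ are attributed to ``smoothness and the de Rham--Szab\'o splitting'' resp.\ ``structure theory of $\nabla$''; no splitting theory is needed or appropriate here. The correct two-line argument is the one behind Lemma \ref{lem:nonpos}: affinely equivalent metrics have the same Jacobi operator $R^\gamma$, which is symmetric both with respect to $g$ and with respect to the osculating metrics of $F$; testing either nonpositivity condition on eigenvectors of $R^\gamma$ shows all eigenvalues are $\le 0$, which is equivalent to $\mathrm{sec}_g\le 0$ and to nonpositive flag curvature simultaneously. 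Second, your sketch of the ``if'' direction (which the paper simply quotes from \cite{KVK}) says ``the endpoint terms vanish because $\alpha$ and $\beta$ are geodesics''; in a general Finsler second-variation formula those endpoint terms are computed with reference vector $\dot\delta_s$, not $\dot\alpha$, and their vanishing is exactly where Berwaldness (reference-vector independence of the connection) enters --- without it the statement is false, as Hilbert geometries of flag curvature $-1$ show. In your setting the hypothesis does provide Berwaldness, but the sketch should make explicit that this is the point where it is used.
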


The Finsler norm $F$ in Theorem \ref{mainthm} is not assumed to be reversible.
Strictly speaking, non-reversible Finsler manifolds are not metric spaces
since the distance lacks symmetry.
Nevertheless Definition \ref{def:NPC} applies just as well,
cf.\ \cite[Section~5]{KVK}.

In the language of Finsler geometry our result reads as follows:

\begin{theorem} \label{thm:conj}
A Finsler manifold $(M,F)$ is Busemann NPC if and only if
it is  a Berwald manifold of nonpositive flag curvature.
\end{theorem}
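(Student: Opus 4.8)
First, I should note that Theorem 1.3 is essentially a reformulation of the main theorem (Theorem 1.2): a Finsler metric $F$ is a Berwald metric precisely when there exists an affinely equivalent Riemannian metric (equivalently, a torsion-free affine connection on $M$ whose parallel transport preserves $F$ — this is a classical characterization, e.g. due to Szabó), and its flag curvature being nonpositive corresponds (again by Szabó's theorem on Berwald metrics) to the associated Riemannian metric $g$ having nonpositive sectional curvature. So the core analytic content is all in Theorem 1.2, and the present proof should be organized around establishing both implications there.

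**The easy direction.** Suppose $(M,F)$ is Berwald of nonpositive flag curvature. Pick the associated Riemannian metric $g$ furnished by Szabó's theorem; its Levi-Civita connection $\nabla$ is the canonical (Berwald) connection of $F$ and hence preserves the Finsler norm. Since flag curvature of a Berwald metric equals the sectional curvature of $g$, the metric $g$ has nonpositive sectional curvature, so by the Riemannian case of Busemann's theorem every point has a $g$-convex neighborhood. The point is then to transfer convexity from $g$ to $F$: geodesics of $F$ and of $g$ coincide as unparametrized curves (affine equivalence), and because $\nabla$-parallel transport is a linear $F$-isometry between tangent spaces, one compares two $F$-geodesics $\gamma_1,\gamma_2$ by estimating $\frac{d^2}{dt^2}F$-distance using the second variation formula, where the curvature term is exactly the $g$-sectional curvature $\le 0$ and the "norm" terms are handled by convexity of $F$ on each tangent space. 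This should give convexity of $t\mapsto d_F(\gamma_1(t),\gamma_2(t))$ on small balls.

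**The hard direction — the main obstacle.** The substantial work is showing that Busemann NPC forces $F$ to be Berwald. I would proceed as follows. Busemann convexity gives, for every point and every pair of geodesics, a convexity inequality; taking $\gamma_2$ to be a fixed point and $\gamma_1$ a variable geodesic yields convexity of distance-to-a-point functions, hence good control on geodesics. The key new idea, as the abstract signals, is a characterization of Berwald metrics via \emph{linear parallel transport}: one defines parallel transport along an $F$-geodesic by using the family of nearby geodesics and the first-order behavior of the distance, and shows that Busemann convexity forces this transport to be \emph{linear} (i.e. to map tangent spaces isometrically and linearly), not merely a bi-Lipschitz or norm-preserving map. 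Concretely, Busemann's inequality applied to a geodesic variation says the transverse Jacobi-type field has convex $F$-norm; applying it in \emph{both} time directions (using that $-t$ reparametrizations of geodesics are geodesics after the reversibility caveat is handled, or by a limiting/infinitesimal argument) pins the norm of this field to be \emph{affine} along short geodesics, which is a very rigid condition. From affinity of transverse fields one extracts that the "connection" defined by $F$ is linear in the fiber directions, i.e. the geodesic spray is quadratic, which is exactly the Berwald condition. Once $F$ is Berwald, Szabó gives the Riemannian $g$; nonpositivity of its sectional curvature then follows because a strict positivity of sectional curvature somewhere would, via the second variation formula run in reverse, violate the convexity of $F$-distance between nearby geodesics.

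I expect the genuinely delicate step to be the passage from "the $F$-norm of every transverse geodesic-variation field is convex (in both directions, hence affine)" to "the spray is quadratic." This requires carefully differentiating the Busemann inequality twice in the transverse parameter and once or twice in the longitudinal parameter, controlling error terms uniformly, and invoking the (non-smooth a priori) regularity of the resulting parallel transport to conclude it is a linear isometry; establishing enough regularity of geodesics and of this transport to make the differentiation legitimate is where most of the technical effort will go. The non-reversibility of $F$ adds bookkeeping — e.g. one cannot simply reverse a geodesic — so one must run the two-sided argument infinitesimally rather than by symmetry.
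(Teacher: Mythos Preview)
Your overall architecture matches the paper's: show that Busemann NPC forces the linear parallel transport along every geodesic to preserve $F$, deduce Berwaldness from this, and then extract nonpositive flag curvature. But the mechanism you propose for the first step has a genuine gap. You claim that convexity of $t\mapsto F(J(t))$ ``in both time directions'' forces it to be affine; this is false, since time reversal does not convert convexity into concavity (if $f$ is convex then $t\mapsto f(-t)$ is convex too), and in fact $F(J(t))$ is \emph{not} affine in general. The paper's argument (Lemma~\ref{lem:first}) is different and more delicate. One takes a Jacobi field $J$ that \emph{vanishes} at $t=0$; then $J(t)=t\,W(t)+O(t^3)$ where $W$ is the linearly parallel field with $W(0)=D_\gamma J(0)$ (Lemma~\ref{lem:jw}). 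Convexity of $F(J(t))$ together with $F(J(0))=0$ and this expansion yields $F(J(t))\ge t\,F(W(0))$ for $t\ge 0$, hence $F(W(t))\ge F(W(0))-O(t^2)$ and so $\frac{d}{dt}\big|_{t=0}F(W(t))\ge 0$. The reverse inequality comes not from reversing the geodesic but from running the same argument with the Jacobi field $-J$ on $t\le 0$. Thus $F(W(t))$ has vanishing derivative everywhere and is constant. The two ingredients your sketch is missing are the use of Jacobi fields vanishing at a point and the third-order approximation $J\approx tW$; without these, convexity alone gives you nothing rigid. (Relatedly, note a terminological reversal in your sketch: the linear parallel transport is linear \emph{by construction} via osculating metrics; what must be proved is that it is norm-preserving, not the other way around.)

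For the step from ``linear parallel transport preserves $F$'' to Berwaldness, your idea of showing directly that the spray is quadratic is plausible in spirit but not what the paper does. Instead the paper builds a Riemannian metric $g$ canonically from the norms $F|_{T_pM}$ (so any $F$-preserving linear map automatically preserves $g$), and then shows by a short computation in exponential coordinates (Proposition~\ref{prop:fins}) that every $F$-geodesic is a $g$-geodesic: writing $\kappa(v)$ for the $g$-covariant acceleration of $\gamma_v$ at $t=0$, one differentiates the identity $\langle\kappa(v),v\rangle=0$ in $v$ and combines it with~\eqref{eq:der} to get $\langle\kappa(v),w\rangle=0$ for all $w$. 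This route avoids any regularity issues and never touches the spray coefficients directly. The final nonpositive-flag-curvature step is as you say: a positive eigenvalue of $R^\gamma$ would produce a Jacobi field with $F(J(t))''_{t=0}<0$, contradicting convexity.
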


The definitions of Berwald metrics and flag curvature
are given in Section~\ref{sec:prel}.
Theorem \ref{thm:conj} is essentially a reformulation of Theorem \ref{mainthm},
their equivalence easily follows from Szab\'o's metrization theorem \cite{Szabo}.
The ``if''  direction of Theorem~\ref{thm:conj} is proved by
Krist\'aly,  Varga and Kozma \cite{KVK}, see also~\cite{KK}.
Results of \cite{KK} also imply that
if $(M,F)$ is Berwald and Busemann NPC, then it has nonpositive flag curvature.

\medskip

We now discuss some implications of Theorems 1 and~2
and well-known features of Berwald metrics.
First, Theorem \ref{mainthm} implies that in a connected Busemann NPC Finsler manifold
all tangent spaces are isometric as normed vector spaces.
This is already a strong restriction on a Finsler metric.

Furthermore, if $(M,F)$ and $g$ are as in Theorem \ref{mainthm}
then the Finsler norm at every point
is invariant under the holonomy group of~$(M,g)$.
Hence, if the holonomy group acts transitively on the unit sphere
then the Finsler structure must be Riemannian.
In dimension~2 the local holonomy group is either transitive or trivial,
thus we have the following (cf.~\cite[Corollary 4.3.5]{Chern}):

\begin{cor}
If a 2-dimensional connected Finsler manifold is Busemann NPC,
then it is Riemannian or locally isometric to a normed plane.
\end{cor}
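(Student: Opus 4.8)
The plan is to read the corollary off from Theorem~\ref{mainthm} together with the classical description of holonomy of Riemannian surfaces. By Theorem~\ref{mainthm} we may fix a Riemannian metric $g$ on $M$ of nonpositive Gauss curvature whose Levi-Civita connection $\nabla$ preserves $F$; concretely, $\nabla$-parallel transport along any path is a linear isometry for the norms induced by $F$ on the relevant tangent spaces. Fixing $p\in M$, parallel transport around loops at $p$ (resp.\ around loops contained in a prescribed neighborhood of $p$) yields a subgroup $G_p\subseteq O(T_pM,g_p)$ under which the norm $F_p$ is invariant, hence invariant also under the closure $\overline{G_p}$. So the whole argument reduces to understanding $\overline{G_p}$ and which norms it can leave invariant.

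Suppose first that $g$ is not flat on any neighborhood of $p$. Pick, arbitrarily close to $p$, a small disk around a point where the curvature does not vanish; the holonomy around its boundary is a rotation through the nonzero number $\int K\,dA$ over the disk, and shrinking such disks (and conjugating the loops back to $p$) produces rotations through arbitrarily small nonzero angles. Therefore $\overline{G_p}$ is the full rotation group $SO(T_pM,g_p)$, its invariant norm $F_p$ must be a $g_p$-round one, and $F_p=c_p\sqrt{g_p}$. Since $\nabla$-parallel transport preserves both $F$ and $g$, the factor $c_p$ is locally constant, hence constant as $M$ is connected; rescaling $g$ gives $F=\sqrt g$, so $(M,F)$ is Riemannian. (Equivalently, and more robustly: whenever $F_p$ is Euclidean it is the norm of a unique inner product $h_p$, the one with the same unit ball; parallel transport then preserves $h$, so $h$ is a smooth parallel metric and $F=\sqrt h$. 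In particular $F_p$ is Euclidean for one $p$ iff it is Euclidean for all $p$.)

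In the remaining case $F_p$ is a non-Euclidean norm at every point. By the previous paragraph this rules out $\overline{G_p}=SO(T_pM,g_p)$, so $g$ must be flat on a neighborhood of every point; that is, $(M,g)$ is everywhere locally Euclidean. Choosing around an arbitrary point coordinates in which $g$ is the standard flat metric, $\nabla$-parallel transport becomes the coordinate identification, and since it preserves $F$ the function $F$ is constant in these coordinates; hence that point has a neighborhood isometric to an open subset of the normed plane $(T_pM,F_p)$. The only input beyond Theorem~\ref{mainthm} is the two-dimensional holonomy alternative --- local holonomy is transitive on the $g$-unit circle unless $g$ is flat near the point, cf.\ \cite[Corollary~4.3.5]{Chern} --- and I expect this (classical) fact, rather than the elementary bookkeeping around it, to be the substantive ingredient; granting it, the observation that an $SO(2)$-invariant norm is Euclidean finishes the proof.
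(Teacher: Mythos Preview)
Your argument is correct and follows essentially the same route as the paper: apply Theorem~\ref{mainthm} to obtain a nonpositively curved Riemannian metric $g$ whose parallel transport preserves $F$, then invoke the two-dimensional holonomy dichotomy (local holonomy transitive on the $g$-unit circle or trivial) to conclude Riemannian versus locally-normed-plane. You supply considerably more detail than the paper's one-paragraph sketch---the Gauss--Bonnet argument for the transitive case, the explicit coordinate argument in the flat case, and the ``Euclidean at one point iff at all points'' observation---but the skeleton is identical, down to the citation of \cite[Corollary~4.3.5]{Chern}. One cosmetic point: the transition ``In the remaining case $F_p$ is a non-Euclidean norm at every point'' is not literally the negation of ``$g$ is not flat on any neighborhood of $p$''; what you are actually doing is taking the negation of the conclusion ``$F$ is Riemannian'' and using your parenthetical to get non-Euclidean everywhere, then applying the contrapositive of the first case pointwise. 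The logic is sound, but you might rephrase the case split to make this explicit.
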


In general, recall that the Riemannian holonomy group is transitive
unless the metric locally splits as a product or is locally symmetric \cite{Berger}.
This leads to a classification of Busemann NPC Finsler metrics
exactly as that of Berwald metrics
(see \cite{Szabo} or \cite[Theorem 4.3.4]{Chern})
with additional requirements that
the symmetric spaces involved are of non-compact type
and the Riemannian factors are nonpositively curved.

A Finsler norm $F$ is preserved by the Levi-Civita parallel transport
of a Riemannian metric $g$ if and only if $F$ and $g$ are affinely equivalent,
that is if they have the same geodesics up to affine reparametrizations
(see e.g.\ \cite[Theorem 4.1.3]{Chern}).
Thus we have the following short reformulation of Theorem~\ref{mainthm}:

\begin{cor} \label{cor:affine}
A Finsler metric is Busemann NPC if and only if it is affinely equivalent
to a nonpositively curved Riemannian metric.
\end{cor}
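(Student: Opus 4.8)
The plan is to read off Corollary \ref{cor:affine} from Theorem \ref{mainthm} together with one classical fact. Theorem \ref{mainthm} already identifies the Busemann NPC Finsler manifolds $(M,F)$ with those admitting a nonpositively curved Riemannian metric $g$ whose Levi-Civita connection $\nabla^g$ preserves $F$. So everything reduces to the following purely local, curvature-free statement: for a Riemannian metric $g$ and a Finsler metric $F$ on the same manifold, the connection $\nabla^g$ preserves $F$ \emph{if and only if} $F$ and $g$ are affinely equivalent, i.e.\ have the same geodesics up to affine reparametrization. Granting this, one simply substitutes ``affinely equivalent to $g$'' for ``$\nabla^g$ preserves $F$'' in the statement of Theorem \ref{mainthm}; the clause ``$g$ is nonpositively curved'' is untouched, and both implications of the corollary drop out at once.

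For the equivalence itself I would argue as follows, though it can equally well be invoked wholesale from \cite[Theorem 4.1.3]{Chern}, as recalled just before the statement. If $\nabla^g$ preserves $F$, then along every $g$-geodesic $\gamma$ the velocity $\dot\gamma$ is $\nabla^g$-parallel and hence has constant $F$-length; more to the point, $\nabla^g$ is a torsion-free linear connection on $TM$ that preserves the indicatrix field of $F$, so by the uniqueness of the Chern connection---equivalently, by Szab\'o's metrization theory of Berwald metrics---it coincides with the affine connection underlying the Chern connection of $F$. Consequently the $\nabla^g$-geodesics are exactly the $F$-geodesics up to affine reparametrization, which is precisely affine equivalence. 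Conversely, affine equivalence of $F$ and $g$ forces their geodesic sprays to agree up to the canonical reparametrization; a short computation then shows that the Christoffel symbols of $\nabla^g$ coincide with the direction-independent connection coefficients of the Chern connection of $F$, so $\nabla^g$ preserves $F$.

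The only point requiring a word of care is that $F$ need not be reversible, whereas affine equivalence and the uniqueness of the Chern connection are typically stated without comment on reversibility; but a linear automorphism of a tangent space preserving $F$ automatically preserves $v\mapsto F(-v)$ as well, so $\nabla^g$-parallel transport respects the possibly asymmetric norm in both directions and no genuine difficulty arises. In short, I expect no real obstacle here: all of the mathematical weight sits in Theorem \ref{mainthm} (and in the classical correspondence between Levi-Civita parallelism and affine equivalence), and Corollary \ref{cor:affine} is a bookkeeping reformulation of it.
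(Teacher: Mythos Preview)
Your approach is essentially identical to the paper's: both derive the corollary from Theorem~\ref{mainthm} by invoking the classical equivalence (cited just before the statement from \cite[Theorem 4.1.3]{Chern}) that the Levi-Civita connection of $g$ preserves $F$ if and only if $F$ and $g$ are affinely equivalent. The paper simply quotes this fact and stops; your additional sketch of the equivalence is not needed (and is a bit informal---e.g.\ invoking Szab\'o here is beside the point, and affine equivalence already forces the sprays to coincide exactly, not merely ``up to reparametrization''), but it does no harm.
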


We mention that Finsler manifolds affinely equivalent to nonpositively curved Riemannian symmetric spaces
have recently turned up  and played a prominent role in a series of papers
of B.~Leeb, M.~Kapovich and J.~Porti, see \cite{KLP} and references therein.

\subsection*{Characterizations of Berwald spaces}
Due to known results mentioned above,
in order to prove Theorem~\ref{mainthm} it   would suffice to show
that every Busemann NPC Finsler space is Berwald.
However, we prove the ``only if'' part of Theorem~\ref{mainthm} directly,
bypassing Szabo's metrization theorem.
The proof requires very little background from Finsler geometry,
summarized in Section \ref{sec:prel}.

The proof consists of two steps. In the first step, contained in Section \ref{sec:bus},
we observe that in a Busemann NPC Finsler space
the linear parallel transport
along any geodesic preserves the Finsler norm.
This follows from analysis of Jacobi fields near points where they vanish.

The above mentioned \emph{linear parallel transport} is defined in Section \ref{sec:prel}.
Note that this notion is different from the more commonly used ``canonical parallel transport'',
which is usually non-linear.
See e.g.\ \cite[Chapter~4]{Chern} where the two parallel transports
are considered together.
The non-linear parallel transport is not used in this paper
beyond the comparison remarks in this introduction.

In the second step, contained in Proposition \ref{prop:1} (see Section \ref{sec:ber}),
we prove the following general fact about Finsler metrics:
\emph{If the linear parallel transport of a Finsler manifold $(M,F)$ preserves $F$,
then $F$ is Berwald.}
In the proof (see also Proposition \ref{prop:fins})
we construct a Riemannian metric affinely equivalent to $F$.
This Riemannian metric is then used
in the proof of Theorem~\ref{mainthm} in Section \ref{sec:proof}.

\begin{rem}
We suggest the reader to compare Proposition \ref{prop:1} with well-known facts about
the canonical (non-linear) parallel transport:
It always preserves the Finsler norm,
and it is linear if and only if the metric is Berwald.
Proposition~\ref{prop:1} ``mirrors'' the last mentioned characterization
with the linear parallel transport,
which is by definition linear but does not, in general, preserve the norm.
Note that for Berwald metrics the two parallel transports coincide,
see \cite[\S4.3]{Chern}.
\end{rem}

As a by-product, we obtain  another characterization of Berwald spaces
in terms of the \emph{linear  holonomy group}
naturally defined via the linear parallel transport.
Namely in Proposition \ref{prop:2} we prove the following:
\emph{The closure of the linear holonomy group of a Finsler metric
is compact if and only if the metric is Berwald.}

We mention that similar questions for the non-linear holonomy group
(defined via the non-linear parallel transport)
were recently studied in \cite{Muzsnay}.

\subsection*{Beyond smoothness}
We work only with smooth Finsler structures and
the smoothness is essential in our proofs.
It would be interesting to extend some of the results
to non-smooth Finsler structures and more general metric spaces.
In particular Corollary \ref{cor:affine} suggests the following  questions.

\emph{Question 1: Given a Busemann convex metric space, is there a CAT(0) space closely and naturally  related to it?}

\emph{Question 2:  Let $(X,d)$ be a geodesic space affinely equivalent to a Busemann convex space $(X,d_0)$.
Is it true that $(X,d)$ must be Busemann convex as well?}

In the slightly more general class of spaces with convex bicombings the first question turns out to be of interest in relation with the theory of Gromov hyperbolic groups,
see \cite{Langhyp}.  Some examples showing that one cannot expect affine equivalence, as in Corollary \ref{cor:affine}, are discussed in
  Section~\ref{sec:bicombing}.

 Metric  spaces affinely equivalent to Riemannian manifolds
are characterized in \cite{Lyt}
in a way similar to Szab\'o's metrization theorem.
In fact, every such space is a limit of smooth Finsler metrics
affinely equivalent to the same Riemannian one,
see Theorem~1.4 and Lemma~4.1 in \cite{Lyt}.
This and Corollary \ref{cor:affine} imply an affirmative answer to  the second question above
 if $(X,d_0)$ is a  smooth Finsler manifold.

\subsection*{Acknowledgements}
The authors thank David Bao, Martin Kell, Alexandru Kristaly,  Hans-Bert Rademacher, Jozsef Szilasi for helpful comments.
Special thanks are due to Urs Lang for a discussion  which led to the examples
in Section~\ref{sec:bicombing}.
The first author was supported in part by the RFBR grant 17-01-00128-a
and  by the Presidium of the Russian Academy of Sciences grant PRAS-18-01.
The second author was supported in part by the DFG grants SFB TRR 191 and SPP 2026.

\section{Preliminaries} \label{sec:prel}

In this section we recall some basics in Finsler geometry
and prove some auxiliary facts.
We follow the presentation in \cite{Rad},
where most concepts are developed from Riemannian point of view.
We refer to \cite{Chern} and \cite{BCS} for more exhaustive exposition of Finsler geometry.
We include proofs of some standard facts to keep the paper accessible
to readers not familiar with Finsler geometry.

Let $(M,F)$ be a Finsler manifold, with the usual smoothness and strict convexity assumptions
on the Finsler norm $F:TM\to\R_+$.
These assumptions ensure that $F$ determines a smooth geodesic flow on $TM\setminus\{0\}$.
As mentioned in the introduction we do not assume that $F$ is reversible.
We denote by $d_F$ the (non-symmetric) distance function induced by $F$ on~$M$.

By a {\it geodesic} we always mean an affine geodesic, i.e., a constant-speed one.
Constant paths are not regarded as geodesics.
Since all our considerations are local, we may always assume that the manifold $M$
is narrowed down to a small open region where all geodesics are embedded.

For a nonzero $v\in TM$, we denote by $\gamma _v$ the unique
geodesic with initial data $\dga_v(0)=v$.
Note that $\gamma_v(t)$ depends smoothly on $v$ and~$t$.

A {\it Jacobi field} along a geodesic $\gamma$ is a variation field
of a smooth family of geodesics.
We denote by $\mathcal J^F(\gamma )$ the set of all Jacobi fields along $\gamma$.

We make use of several notions from Finsler geometry.
They all can be defined by means of osculating Riemannian metrics,
see below.

\subsection{Osculating Riemannian metrics}

For every $p\in M$ and $v\in T_pM\setminus\{0\}$ there is
a unique positive definite quadratic form $g_v$ on $T_pM$
such that $g_v$ and $F^2|_{T_pM}$ agree to second order at~$v$.
If $V$ is a non-vanishing vector field on an open set $U\subset M$,
the family $\{g_{V(p)}\}_{p\in U}$ of quadratic forms
defines a Riemannian metric $g_V$ on~$U$.
If $\gamma$ is an embedded geodesic and $V$ extends
the velocity field of $\gamma$ to a neighborhood of $\gamma$,
we call $g_V$ an \textit{osculating Riemannian metric} for $\gamma$
and denote it by $g^\gamma$.
Note that $g^\gamma$ is uniquely determined at every point on $\gamma$
but the extension to a neighborhood is not unique.

We need the following property
(see Lemma 4.4 and Proposition 5.1 in \cite{Rad} or Chapter~8 in \cite{Sh01}):

\begin{lem} \label{lem:osculating}
Let $\gamma$ be an embedded geodesic of $(M,F)$ and
$g=g^\gamma$ an osculating Riemannian metric for $\gamma$.
Then $\gamma$ is a geodesic of $g$ with the same
space of Jacobi fields:
$\mathcal J^g(\gamma ) = \mathcal J^F(\gamma)$.
\end{lem}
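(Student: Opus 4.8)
The plan is to exploit the fact that $g = g^\gamma$ agrees with $F^2$ to second order along the velocity field of $\gamma$, and that being a geodesic or a Jacobi field is a second-order condition. First I would recall that the geodesic equation and the Jacobi equation for a Finsler metric $F$, written in terms of the spray coefficients $G^i(x,y)$, depend only on the second-order jet of $F^2$ in the $y$-variable evaluated along the relevant direction. Concretely, the spray coefficients are built from $g_{ij} = \frac12 \partial_{y^i}\partial_{y^j} F^2$ and its first $x$-derivatives, so along $\gamma$ the Finsler spray and the Levi-Civita spray of $g$ have the same coefficients $G^i$ and the same derivatives $\partial_{y^j} G^i$ at the points $(\gamma(t), \dga(t))$. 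Since $V$ extends $\dga$, the metric $g = g_V$ has $g_{ij}(\gamma(t)) = g_{ij}(\gamma(t),\dga(t))$ in the obvious notation, and the required matching of derivatives follows from the construction of $g_v$ as the osculating form.

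Next I would argue the two claims in turn. For the geodesic claim: the Finsler geodesic equation is $\ddot x^i + 2 G^i(x,\dot x) = 0$, and $\gamma$ satisfies it by hypothesis; since the Levi-Civita connection of $g$ has Christoffel symbols whose contraction $\Gamma^i_{jk}(\gamma(t)) \dot x^j \dot x^k$ equals $2 G^i(\gamma(t), \dga(t))$ — this is the standard fact that the osculating metric has the same geodesic through the osculating direction — the curve $\gamma$ also solves the $g$-geodesic equation. For the Jacobi field claim: a Jacobi field along a geodesic, for either structure, is a solution of a linear second-order ODE whose coefficients are the partial derivatives of the spray coefficients $\partial_{y^j} G^i$ and $\partial_{x^j} G^i$ along $\gamma$ (equivalently, the curvature operator of the Berwald/Levi-Civita connection). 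Because these coefficients coincide at every point of $\gamma$, the two Jacobi equations are literally the same linear ODE, hence $\mathcal J^F(\gamma) = \mathcal J^g(\gamma)$. One should note that ``$\subseteq$'' can also be seen variationally: a Finsler Jacobi field is a variation field of $F$-geodesics; applying the osculating construction to each nearby geodesic does not obviously produce $g$-geodesics, so the clean way really is via the ODE, and then equality follows since both spaces have dimension $2n$ (or $2n-1$ if one restricts appropriately).

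The main obstacle, and the only place requiring genuine care, is verifying that the \emph{first} $x$-derivatives of $g_{ij}$ at points of $\gamma$ agree with the corresponding derivatives of $\frac12 \partial_{y^i}\partial_{y^j} F^2$ along $\gamma$ — i.e., that the osculating metric matches $F$ to an order sufficient to control the spray coefficients and not merely the metric tensor pointwise. This is where the choice ``$V$ extends $\dga$'' is used: differentiating $g_{V(x)} $ in $x$ along $\gamma$ picks up both the $\partial_x$ and the $\partial_y$ (through $dV$) contributions, and one checks these combine to exactly reproduce the terms entering $G^i$ and its derivatives, independently of the particular extension $V$ (consistent with the uniqueness assertion in the lemma's statement and the cited Lemma 4.4 / Proposition 5.1 of \cite{Rad}). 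I would carry this out by writing the spray coefficients of $g_V$ explicitly and comparing term by term, citing \cite{Rad} or \cite{Sh01} for the computation rather than reproducing it in full.
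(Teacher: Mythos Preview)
Your proposal is correct and follows exactly the approach the paper indicates: the paper does not give a detailed proof but defers to \cite{Rad} and \cite{Sh01}, remarking only that the geodesic and Jacobi equations depend on the Lagrangian through at most second $y$-derivatives, which is precisely the mechanism you unpack via the spray coefficients. Your identification of the one delicate point---that the $V$-dependent terms arising from $\partial_x g_{V(x)}$ drop out along $\gamma$ (via $C_{ijk}\dga^k=0$), making the result independent of the extension---is exactly what the cited computations in \cite{Rad} verify.
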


We comment that Lemma \ref{lem:osculating} is essentially a consequence
of the fact that respective equations
from calculus of variations involve only first and second derivatives
of a Lagrangian with respect to the velocity variable.

\subsection{Linear parallel transport and Jacobi operator}
\label{sub:par}

An osculating Riemannian metric $g^\gamma$ for a geodesic $\gamma$
defines the following structures,
which are in fact independent on the choice of $g^\gamma$
(see \cite{Rad} or Lemma \ref{lem:prel} below):
\begin{itemize}
 \item The covariant derivative of a vector field $W$ along $\gamma$,
denoted by $D_{\gamma}W$:
\be \label{eq:covdev}
  D_{\gamma}W(t) =\frac{\nabla^\gamma}{dt} W(t)
\ee
where $\nabla^\gamma$ is the Levi-Civita connection of $g^\gamma$.
 \item The linear parallel transport along $\gamma$ associated to this connection.
It is a family of non-degenerate linear maps between tangent spaces $T_{\gamma(t)}M$.
 \item The Jacobi operator $R^\gamma$, which is a family of linear operators
on the tangent spaces $T_{\gamma(t)}M$ satisfying the Jacobi equation
\be\label{eq:jacobi}
 D_{\gamma}D_{\gamma}J(t) = -R^\gamma(J(t))
\ee
for all Jacobi fields $J$ along $\gamma$.
Namely $R^\gamma(w)=R_{g^\gamma}(w,\dga(t))\dga(t)$
for all $w\in T_{\gamma(t)} M$, where $R_{g^\gamma}$
is the Riemannian curvature tensor of $g^\gamma$.
\end{itemize}
Our notation has $\gamma$ rather than $\dga$ in indices (cf.~\cite{Rad})
since we are only interested in vector fields along geodesics.

The following simple computation is used several times in the paper.

\begin{lem}\label{lem:jw}
Let $\gamma=\gamma_v$ be a geodesic of $(M,F)$
and $w\in T_{\gamma(0)}M$. Define a Jacobi field $J$ along $\gamma$ by
\be \label{eq:jw-def}
 J(t)= \frac d {ds} \Big  |_{s=0} \gamma _{v+sw}  (t) \,,
\ee
and let $W$ be a $D_{\gamma}$-parallel vector field along $\gamma$
(with respect to any osculating Riemannian metric) with $W(0)=w$.
Then
\be \label{eq:jw}
 J(t) = t\cdot W(t) + O(t^3) \, , \qquad t\to 0.
\ee
\end{lem}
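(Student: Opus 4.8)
The plan is to work inside a single osculating Riemannian metric $g=g^\gamma$, so that Lemma~\ref{lem:osculating} lets us treat $J$ as an honest Riemannian Jacobi field along $\gamma$ satisfying \eqref{eq:jacobi}, with $D_\gamma$ the Levi-Civita covariant derivative. The strategy is then the classical Riemannian computation of the Taylor expansion of a Jacobi field at a point, carried out in the $D_\gamma$-parallel frame along $\gamma$. First I would record the initial data of $J$: since $\gamma_{v+sw}(0)=\gamma(0)$ for all $s$, we get $J(0)=0$; and differentiating \eqref{eq:jw-def} in $t$ at $t=0$ and exchanging the order of the $s$- and $t$-derivatives gives $D_\gamma J(0)=\dot\gamma_{v+sw}(0)|_{s=0}$ read off as a covariant derivative, which equals $w$ — here one uses that $\dot\gamma_{v+sw}(0)=v+sw$, so the $s$-derivative is $w$, and that the covariant and ordinary derivative agree at the basepoint up to terms that do not affect the value. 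Thus $J(0)=0$ and $D_\gamma J(0)=w=W(0)$.

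Next I would expand in the parallel frame. Write $J(t)=\sum_i f_i(t)E_i(t)$ where $\{E_i\}$ is a $D_\gamma$-parallel orthonormal frame along $\gamma$; then $D_\gamma J=\sum \dot f_i E_i$ and $D_\gamma D_\gamma J=\sum \ddot f_i E_i$, and the Jacobi equation \eqref{eq:jacobi} becomes the linear ODE system $\ddot f_i(t)=-\sum_j R_{ij}(t)f_j(t)$ with smooth coefficients $R_{ij}$. With initial conditions $f_i(0)=0$, $\dot f_i(0)=w_i$ (the components of $w$ in the frame), Taylor's theorem gives $f_i(t)=w_i t+\tfrac12\ddot f_i(0)t^2+O(t^3)$, and the ODE forces $\ddot f_i(0)=-\sum_j R_{ij}(0)f_j(0)=0$. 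Hence $f_i(t)=w_i t+O(t^3)$, i.e. $J(t)=t\sum_i w_i E_i(t)+O(t^3)$. Since $W$ is the $D_\gamma$-parallel field with $W(0)=w$, we have $W(t)=\sum_i w_i E_i(t)$ exactly, so $J(t)=tW(t)+O(t^3)$, which is \eqref{eq:jw}.

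The one point that needs a little care, and which I would regard as the main (mild) obstacle, is justifying the interchange of the $\partial_s$ and $\partial_t$ derivatives and the passage from plain partial derivatives of the two-parameter map $(s,t)\mapsto\gamma_{v+sw}(t)$ to the covariant derivative $D_\gamma$ — in particular that $D_\gamma J(0)=w$ rather than $w$ plus some Christoffel-symbol correction. This is handled by noting that $J$ is by construction the variation field of the geodesic variation $\gamma_{v+sw}$, hence a Jacobi field (this is already built into the setup of the lemma), and that at a single point the value of a covariant derivative of a vector field along a curve differs from the ordinary derivative of its components only through terms proportional to the values of the field, which here vanish since $J(0)=0$; alternatively one invokes the standard identity $D_\gamma J(0)=\tfrac{\nabla}{ds}\dot\gamma_{v+sw}(0)|_{s=0}$ valid for geodesic variations together with $\dot\gamma_{v+sw}(0)=v+sw$. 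Everything else is the routine ODE Taylor expansion above, and the cubic error term is exactly what the vanishing of the $t^2$-coefficient produces, with smoothness of $R_{ij}$ (equivalently, of the geodesic flow) guaranteeing the $O(t^3)$ remainder is uniform on a neighborhood.
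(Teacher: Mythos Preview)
Your proof is correct and follows essentially the same route as the paper: both establish $J(0)=0$, $D_\gamma J(0)=w$, and then use the Jacobi equation \eqref{eq:jacobi} together with $J(0)=0$ to get $D_\gamma D_\gamma J(0)=0$, so that the Taylor expansion has no $t^2$ term. The only cosmetic difference is that you carry this out componentwise in a $D_\gamma$-parallel frame, while the paper instead compares $J$ directly with the field $X(t)=t\,W(t)$ and checks that $X(0)$, $D_\gamma X(0)$, $D_\gamma D_\gamma X(0)$ match those of $J$; both are the same computation.
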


\begin{proof}
Fix an osculating Riemannian metric $g^\gamma$ for \eqref{eq:covdev} and \eqref{eq:jacobi}.
For brevity, in this proof we use notation $X'$ for
the covariant derivative $D_{\gamma}X$ of a vector field $X$ along $\gamma$.
From \eqref{eq:jw-def} we have $J(0)=0$,
$$
 J'(0)= \frac{\nabla^\gamma}{dt}\Big|_{t=0} \frac {d}{ds}\Big|_{s=0} \gamma_{v+sw}(t)
 = \frac{d}{ds}\Big|_{s=0} \frac {d}{dt}\Big|_{t=0} \gamma_{v+sw}(t) = w \,,
$$
and
$
 J''(0) = - R^\gamma(0) = 0
$.
The vector field $X(t):=t\cdot W(t)$ has the same value
and the same first and second covariant derivatives at $t=0$, namely
$X(0)=0$, $X'(0)=W(0)=w$ and $X''(0)=2W'(0)=0$
since $W$ is $D_{\gamma}$-parallel.
Therefore $J(t)-X(t)=O(t^3)$ as $t\to 0$.
\end{proof}

\begin{lem} \label{lem:prel}
Let $g_1,g_2$ be two Riemannian metrics on $M$.
Suppose that a path $\gamma$ is a geodesic for both $g_1$ and $g_2$
and the two metrics induce the same Jacobi fields along $\gamma$,
i.e.\ $\mathcal J^{g_1}(\gamma)=\mathcal J^{g_2}(\gamma)$.
Then $g_1$ and $g_2$ induce the same covariant derivative
and the same Jacobi operator along $\gamma$.

In particular, for a Finsler manifold $(M,F)$ and an $F$-geodesic $\gamma$,
the operators $D_{\gamma}$ and $R^\gamma$ are independent of
the choice of the osculating metric $g^\gamma$.
\end{lem}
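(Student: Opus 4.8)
The plan is to reduce the statement to a pointwise comparison along $\gamma$. First I would recall that the space of Jacobi fields of a metric $g_i$ along $\gamma$ is a linear space of dimension $2n$ (where $n=\dim M$), and that a Jacobi field $J$ is uniquely determined by the pair $(J(t_0), D^{g_i}_\gamma J(t_0))\in T_{\gamma(t_0)}M\times T_{\gamma(t_0)}M$ at any fixed parameter $t_0$. Thus the hypothesis $\mathcal J^{g_1}(\gamma)=\mathcal J^{g_2}(\gamma)$ says that one and the same $2n$-dimensional space $\mathcal J$ of vector fields along $\gamma$ is simultaneously the Jacobi space for both connections. The key observation is then that this space $\mathcal J$ already determines the covariant derivative $D_\gamma$ along $\gamma$: I would show that for any $w\in T_{\gamma(t_0)}M$ there is a \emph{unique} decomposition coming from $\mathcal J$ that pins down what "$D_\gamma$-parallel with value $w$ at $t_0$" must mean, independently of which $g_i$ we use.

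Concretely, the second key step is to identify the parallel fields. A vector field $W$ along $\gamma$ is $D^{g_i}_\gamma$-parallel if and only if the field $t\mapsto (t-t_0)W(t)$ lies in $\mathcal J^{g_i}(\gamma)$: indeed $J(t)=(t-t_0)W(t)$ has $J(t_0)=0$ and $D^{g_i}_\gamma J(t_0)=W(t_0)$ and $D^{g_i}_\gamma D^{g_i}_\gamma J(t_0)=2D^{g_i}_\gamma W(t_0)$, and more generally one checks directly from the Jacobi equation $D_\gamma D_\gamma J=-R^\gamma J$ that $(t-t_0)W$ is Jacobi precisely when $W$ is parallel (this is essentially the computation already performed in Lemma \ref{lem:jw}, localized at $t_0$ instead of $0$). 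Since the condition "$(t-t_0)W\in\mathcal J$" refers only to the common space $\mathcal J$ and not to the individual connection, the sets of $D^{g_1}_\gamma$-parallel and $D^{g_2}_\gamma$-parallel fields coincide. Two linear connections along a curve with the same parallel fields are equal (a parallel frame for one is a parallel frame for the other, and the connection is recovered by differentiating coordinates in such a frame), so $D^{g_1}_\gamma = D^{g_2}_\gamma =: D_\gamma$. Finally, once the covariant derivative agrees, the Jacobi operator is read off from any single Jacobi field $J$ with $J(t_0)=0$ via $R^{g_i,\gamma}(w) = -D_\gamma D_\gamma J(t_0)$ where $w=D_\gamma J(t_0)$ ranges over all of $T_{\gamma(t_0)}M$ as $J$ ranges over $\mathcal J$; since $\mathcal J$ and $D_\gamma$ are common to both metrics, $R^{g_1,\gamma}=R^{g_2,\gamma}$.

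The "in particular" clause is then immediate: given two osculating metrics $g_1=g^\gamma_1$, $g_2=g^\gamma_2$ for an $F$-geodesic $\gamma$, Lemma \ref{lem:osculating} gives $\mathcal J^{g_1}(\gamma)=\mathcal J^F(\gamma)=\mathcal J^{g_2}(\gamma)$ and that $\gamma$ is a geodesic of both, so the first part applies and shows $D_\gamma$ and $R^\gamma$ depend only on $F$ and $\gamma$. The main obstacle I anticipate is purely bookkeeping: making the claim "same parallel fields $\Rightarrow$ same connection along $\gamma$" fully rigorous, i.e.\ checking that the first-order linear ODE system defining parallel transport is determined by its solution space. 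This is standard linear ODE theory (a fundamental matrix of solutions determines the coefficient matrix), but one should phrase it carefully along a curve rather than invoking global connection identities.
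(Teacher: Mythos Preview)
Your plan has the right overall shape---characterize parallel fields via $\mathcal J$, then recover $D_\gamma$, then $R^\gamma$---and this is exactly the paper's route. However, your key characterization is false as stated: for a parallel field $W$, the field $t\mapsto(t-t_0)W(t)$ is \emph{not} a Jacobi field unless $R^\gamma(W)\equiv 0$. Indeed, when $W$ is parallel one computes $D_\gamma D_\gamma\bigl((t-t_0)W\bigr)=0$, whereas the Jacobi equation requires this to equal $-(t-t_0)R^\gamma(W(t))$, which is generically nonzero. Lemma~\ref{lem:jw} does \emph{not} assert that $tW(t)$ is Jacobi; it says the genuine Jacobi field $J$ with $J(0)=0$, $D_\gamma J(0)=w$ satisfies $J(t)=tW(t)+O(t^3)$, and the cubic remainder is essential.

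The paper repairs this by working with the asymptotic rather than an exact identity: the metric-independent condition ``there exists $J\in\mathcal J$ with $J(t)=(t-t_0)W(t)+O((t-t_0)^3)$'' is equivalent (for either metric $g_i$) to $D_\gamma W(t_0)=0$, because matching second-order jets forces $2D_\gamma W(t_0)=D_\gamma D_\gamma J(t_0)=-R^\gamma(J(t_0))=0$. Letting $t_0$ vary then pins down the parallel fields, and from there your remaining steps go through. (One further slip: to read off $R^\gamma(w)$ from the Jacobi equation you want a Jacobi field with $J(t_0)=w$, not $J(t_0)=0$; with $J(t_0)=0$ the equation gives only $0=0$.) So the gap is localized but genuine: replace your exact equality by the third-order asymptotic of Lemma~\ref{lem:jw}.
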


\begin{proof}
We apply the computation from Lemma \ref{lem:jw} to the Riemannian metrics $g_i$.
The formula \eqref{eq:jw} determines the first
order jet of $W(t)$ at $t=0$ in terms of the Jacobi field $J$.
Hence the space $\mathcal J=\mathcal J^{g_i}(\gamma)$
determines (independently of the metric)
the set of vector fields $W$ along $\gamma$ such that $D_{\gamma}W(0)=0$.
Indeed, a vector field satisfies $D_{\gamma}W(0)=0$ if and only if
there exists $J\in\mathcal J$ such that \eqref{eq:jw} holds.
Similarly, for every $t_0\in\R$ the set of vector fields $W$ along $\gamma$
satisfying $D_{\gamma}W(t_0)=0$ is determined by $\mathcal J$.

Thus the set of parallel fields along $\gamma$ is determined by $\mathcal J$
and hence it is the same set for $g_1$ and~$g_2$.
The parallel fields determine the covariant derivative $D_{\gamma}$.
The Jacobi equation \eqref{eq:jacobi}, $\mathcal J$ and $D_{\gamma}$
together determine the Jacobi operator $R^\gamma$.
This proves the first statement of the lemma.
The second one follows by Lemma~\ref{lem:osculating}.
\end{proof}

With Lemma \ref{lem:prel} at hand,
one can define the covariant derivative $D_\gamma$
and the Jacobi operator $R^\gamma$ along
a Finsler geodesic~$\gamma$ as those of any osculating Riemannian metric
associated to~$\gamma$.
Note that with these definitions the first part of
Lemma \ref{lem:prel} applies to Finsler metrics as well.
In particular, affinely equivalent Finsler metrics
induce the same covariant derivatives and the same Jacobi operators.

Following \cite{Chern}, we use the term {\em linear parallel transport}
for the parallel transport operator induced by~$D_\gamma$
and refer to $D_{\gamma}$-parallel vectors fields
as \textit{linearly parallel} vector fields along $\gamma$.

\subsection{Flag curvature}
\label{sub:flag}

The {\it flag curvature} $K_F(v,\sigma)$ of a Finsler manifold $(M,F)$ is a function of
a point $p\in M$, a plane (i.e., two-dimensional linear subspace)
$\sigma\subset T_pM$, and a direction $v\in\sigma\setminus\{0\}$
in that plane.
It can be defined as the sectional curvature $K_g(\sigma)$
of an osculating Riemannian metric $g=g^\gamma$ for the geodesic $\gamma=\gamma_v$.
From the Riemannian Jacobi equation one sees that
\be\label{eq:curv}
  K_F(v,\sigma) = K_g(\sigma)=\frac{g(R^\gamma(w),w)}{|v\wedge w|^2_g}
\ee
for any vector $w\in\sigma$ such that $v$ and $w$ are linearly independent,
where
$$
 |v\wedge w|^2_g=g(v,v)g(w,w)-g(v,w)^2 .
$$
The formula \eqref{eq:curv} and Lemma \ref{lem:prel} imply that the definition
does not depend on the choice of the osculating metric $g$.
If the metric $F$ is Riemannian, then $K_F(v,\sigma)$
equals its sectional curvature at~$\sigma$,
in particular it does not depend on $v$.

Consider the Jacobi operator $R^\gamma$ at $p=\gamma(0)$.
By symmetries of the Riemannian curvature tensor, this operator
on $T_pM$ is symmetric with respect to the inner product induced by $g=g^\gamma$.
Hence it has only real eigenvalues.
By \eqref{eq:curv}, the flag curvature of $(M,F)$ is nonpositive
if and only if these eigenvalues are all nonpositive
for every $p$ and $\gamma$.
Since Jacobi operators of affinely equivalent metrics coincide,
this implies the following:

\begin{lem} \label{lem:nonpos}
Let $F_1$ and $F_2$ be affinely equivalent Finsler metrics
and the flag curvature of $F_1$ is nonpositive.
Then the flag curvature of $F_2$ is nonpositive as well.
\end{lem}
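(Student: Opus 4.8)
The final statement to prove is Lemma~\ref{lem:nonpos}: if $F_1$ and $F_2$ are affinely equivalent Finsler metrics and $F_1$ has nonpositive flag curvature, then so does $F_2$.

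The plan is to reduce everything to the Jacobi operator. Affinely equivalent Finsler metrics, by definition, have the same geodesics up to affine reparametrization; since constant-speed geodesics are preserved and (after restricting to a small region as permitted) we may identify the geodesics $\gamma$ of $F_1$ and $F_2$ as the same paths, the spaces of Jacobi fields coincide: $\mathcal J^{F_1}(\gamma)=\mathcal J^{F_2}(\gamma)$ for every such $\gamma$. By Lemma~\ref{lem:prel} (whose first part, as remarked right after its proof, applies to Finsler metrics via osculating metrics), the covariant derivative $D_\gamma$ and the Jacobi operator $R^\gamma$ along $\gamma$ are therefore the same whether computed from $F_1$ or from $F_2$. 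This is the conceptual heart: all the curvature information relevant to the sign of flag curvature is encoded in $R^\gamma$, and $R^\gamma$ is an affine invariant.

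Next I would translate ``nonpositive flag curvature'' into a statement purely about $R^\gamma$. Fix a point $p$ and a geodesic $\gamma=\gamma_v$ through $p=\gamma(0)$ with $\dga(0)=v$. Pick an osculating Riemannian metric $g=g_i^\gamma$ for $F_i$. By~\eqref{eq:curv}, $K_{F_i}(v,\sigma)=g(R^\gamma(w),w)/|v\wedge w|_g^2$ for any $w\in\sigma$ independent of $v$, and as noted in the paragraph preceding the lemma, $R^\gamma$ is symmetric with respect to $g$, so it has real eigenvalues; moreover the flag curvature of $F_i$ is nonpositive at $p$ (for the flag through $v$) precisely when these eigenvalues are all nonpositive. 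Now the subtle point: the operator $R^\gamma$ is the same for both metrics, but the inner products $g_1=g_1^\gamma$ and $g_2=g_2^\gamma$ at $p$ may differ, so a priori one must check that ``$R^\gamma$ has nonpositive $g_1$-eigenvalues'' implies ``$R^\gamma$ has nonpositive $g_2$-eigenvalues.'' This is a linear-algebra fact: if a linear operator $A$ on a finite-dimensional real vector space is self-adjoint with respect to \emph{some} Euclidean inner product and all its eigenvalues are $\le 0$, then for \emph{any} Euclidean inner product $h$ with respect to which $A$ happens to be self-adjoint, the $h$-eigenvalues are the same eigenvalues (eigenvalues are basis-independent), hence still $\le 0$. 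Alternatively and even more directly: the condition ``all eigenvalues of $R^\gamma$ are $\le 0$'' does not reference any inner product at all, so once we know $R^\gamma$ is the same operator for $F_1$ and $F_2$, and that for each the self-adjointness forces real spectrum, the sign condition transfers automatically.

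So the steps, in order, are: (1)~observe affine equivalence gives $\mathcal J^{F_1}(\gamma)=\mathcal J^{F_2}(\gamma)$ for all common geodesics $\gamma$; (2)~invoke Lemma~\ref{lem:prel} to conclude $R^\gamma$ is independent of which metric we use; (3)~recall from~\eqref{eq:curv} and the discussion before the lemma that nonpositive flag curvature of $F_i$ is equivalent to all eigenvalues of every $R^\gamma$ being nonpositive --- a statement about the operator alone; (4)~conclude the sign condition, holding for $F_1$, holds verbatim for $F_2$. I expect the only place warranting care is step~(3)--(4), namely making sure the ``nonpositive eigenvalues'' reformulation is genuinely inner-product-free (so that the differing osculating metrics cause no trouble); but this is exactly the content of the two sentences immediately preceding the lemma statement in the excerpt, so the argument is essentially already assembled and the proof is short.
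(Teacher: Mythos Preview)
Your proposal is correct and follows essentially the same argument as the paper: nonpositive flag curvature is reformulated as nonpositivity of the eigenvalues of the Jacobi operator $R^\gamma$, and since affinely equivalent metrics share the same $R^\gamma$ (via Lemma~\ref{lem:prel}), the condition transfers. Your explicit remark that eigenvalues are inner-product-free, so the differing osculating metrics $g_1^\gamma$ and $g_2^\gamma$ cause no trouble, makes precise a point the paper leaves implicit.
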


\subsection{Berwald metrics}
\label{sub:berwald}

Berwald metrics are a special class of Finsler metrics that can be
defined in many equivalent ways, see \cite[Ch.~10]{BCS} and \cite[Ch.~4]{Chern}.
A definition convenient for our purposes is the following.
A Finsler manifold $(M,F)$ is Berwald if and only if there exists
a symmetric affine connection on $M$ such that the geodesics of $F$
are also geodesics of this connection.

By Szab\'o's metrization Theorem \cite{Szabo}, this affine connection
can be realized as the Levi-Civita connection of a Riemannian metric.
Thus a Finsler metric is Berwald if and only if it is affinely equivalent
to a Riemannian metric.

Finally, we need the fact that if the Levi-Civita connection of
a Riemannian metric $g$ preserves a Finsler norm $F$,
then $F$ and $g$ are affinely equivalent
(and hence $F$ is Berwald).
See e.g.\ \cite[Lemma 4.1.5]{Chern} for a proof.
Alternatively, one can verify directly that in this case
the Euler-Lagrange equation for Finsler geodesics is satisfied by Riemannian geodesics.
This is easy to see in the Riemannian exponential coordinates
at the point in question.

\section{An implication of Busemann convexity} \label{sec:bus}

The aim of this section is the following

\begin{lem}  \label{lem:first}
Let $(M,F)$ be a Busemann NPC Finsler manifold
and $\gamma:(a,b)\to M$ a geodesic.
Then the linear parallel  transport  along $\gamma$ preserves  $F$,
i.e., for every linearly parallel vector field $W$ along~$\gamma$ the function
$t\mapsto F(W(t))$ is constant.
\end{lem}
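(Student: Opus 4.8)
The plan is to prove the equivalent statement that for \emph{every} nonzero $D_\gamma$-parallel vector field $V$ along $\gamma$ the function $t\mapsto F(V(t))$ is constant; since a $D_\gamma$-parallel field vanishing at one parameter vanishes identically, a nonzero one is nowhere zero and $t\mapsto F(V(t))$ is smooth. The only consequence of Busemann convexity I would use is this: for every Jacobi field $J$ along $\gamma$ the function $t\mapsto F(J(t))$ is convex. Indeed, writing $J$ as the variation field of a smooth family $\gamma_s$ of geodesics with $\gamma_0=\gamma$, for each small $s>0$ the function $t\mapsto d_F(\gamma(t),\gamma_s(t))$ is convex by Busemann convexity; and $\tfrac{1}{s}\,d_F(\gamma(t),\gamma_s(t))\to F(J(t))$ as $s\to 0^+$ (if $\gamma_s(t)=\exp_{\gamma(t)}(\zeta_s(t))$, then $d_F(\gamma(t),\gamma_s(t))=F(\zeta_s(t))$ for small $s$ and $\zeta_s(t)=sJ(t)+o(s)$), so $t\mapsto F(J(t))$ is a pointwise limit of convex functions.

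Next I would fix a parameter $t_0$ and a nonzero $D_\gamma$-parallel field $V$, and take the Jacobi field $J$ with $J(t_0)=0$ and $D_\gamma J(t_0)=V(t_0)$, given by $J(t)=\frac{d}{ds}\big|_{s=0}\gamma_{\dga(t_0)+sV(t_0)}(t-t_0)$. Since $J$ vanishes to first order at $t_0$ with nonzero derivative there, $\widehat V(t):=J(t)/(t-t_0)$ extends to a smooth vector field along $\gamma$ near $t_0$ with $\widehat V(t_0)=V(t_0)\neq 0$; moreover Lemma~\ref{lem:jw} (applied to the geodesic $\gamma(\cdot+t_0)$) gives $J(t)=(t-t_0)V(t)+O((t-t_0)^3)$, that is, $\widehat V(t)=V(t)+O((t-t_0)^2)$. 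Set $\phi(t):=F(J(t))=F\big((t-t_0)\widehat V(t)\big)$; by the previous paragraph $\phi$ is convex with $\phi(t_0)=0$, so the chord-slope $G(t):=\phi(t)/(t-t_0)$ is nondecreasing on each side of $t_0$. By positive homogeneity of $F$ one has $G(t)=F(\widehat V(t))$ for $t>t_0$ and $G(t)=-F(-\widehat V(t))$ for $t<t_0$, and both expressions are smooth near $t_0$ because $\widehat V(t_0)\neq 0$. Letting $t\downarrow t_0$ in the first case and using $\widehat V(t)=V(t)+O((t-t_0)^2)$ yields $\frac{d}{dt}\big|_{t_0}F(V(t))\ge 0$; letting $t\uparrow t_0$ in the second case yields $\frac{d}{dt}\big|_{t_0}F(-V(t))\le 0$.

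Since $t_0$ is arbitrary, this shows that for every nonzero $D_\gamma$-parallel field $V$ the function $F(V(\cdot))$ is nondecreasing and $F(-V(\cdot))$ is nonincreasing; applying the second statement to $-V$ shows $F(V(\cdot))$ is also nonincreasing, hence constant, which for $V=W$ is the assertion. The point I expect to be delicate is the transfer in the middle step: a priori $J$ agrees with $(t-t_0)V$ only to order $(t-t_0)^3$, which is far too crude to control $F(V(t))$ pointwise, but this error affects $G$ only at order $(t-t_0)^2$ and therefore changes neither the value nor the first derivative of $G$ at $t_0$, so the monotonicity of $G$ coming from convexity passes exactly to the one-sided derivatives of $F(\pm V(\cdot))$ at $t_0$. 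The asymmetry between the two cases in the formula for $G$ — and hence the need to invoke $-V$ as well — is precisely where non-reversibility of $F$ is felt.
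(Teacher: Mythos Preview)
Your proof is correct and follows essentially the same approach as the paper's: both first deduce from Busemann convexity that $t\mapsto F(J(t))$ is convex for every Jacobi field, then use the expansion $J(t)=(t-t_0)W(t)+O((t-t_0)^3)$ from Lemma~\ref{lem:jw} to pin down the derivative of $F(W(t))$ at $t_0$. The only cosmetic difference is in how the two one-sided inequalities are obtained: the paper applies convexity to the Jacobi field $-J$ on $t<0$ to get $F(W(t))'_{t=0}\le 0$ directly, whereas you stay with $J$, read off $\frac{d}{dt}\big|_{t_0}F(-V(t))\le 0$, and then close by applying the whole argument to the parallel field $-V$; since the Jacobi field attached to $-V$ is exactly $-J$, the two routes are equivalent.
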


\begin{proof}
The statement is local, thus we may assume that $\gamma$ is contained
in a Busemann convex open subset of~$M$.

First observe that for every Jacobi field $J$ along $\gamma$,
its norm $F(J(t))$ is a convex function of~$t$.
Indeed, let $\{\gamma_s\}$ be a geodesic variation of $\gamma$
whose variation field is~$J$.
By Busemann convexity, for every $s$ the function
$
 f _s(t)= d_F(\gamma (t), \gamma _s(t))
$
is convex.
Hence so is the limit function
$ \lim_{s\to 0} \frac{f_s(t)}{s}  = F(J(t)) $.

Now let $W\ne 0$ be a linearly parallel vector field along $\gamma$.
Note that $F(W(t))$ is a smooth function of~$t$.
Assuming that $0\in (a,b)$, Lemma \ref{lem:prel} implies that
there is a Jacobi field $J$ along $\gamma$ such that
$$
 J(t)= t\cdot  W(t) + O (t^3)\,, \qquad t\to 0 \,.
$$
For this Jacobi field $J$ and $t>0$ we have
\begin{equation} \label{eq:geq}
F(J(t)) = F(t\cdot W(t)) + O (t^3)
 =t \cdot F(W(t)) +O(t^3) \,, \qquad t\to 0_+.
\end{equation}
In particular $F(J(t))=t\cdot F(W(0)) + o(t)$ as $t\to 0_+$.
Since $F(J(t))$ is a convex function,
it follows that $F(J(t)) \ge t\cdot F(W(0))$ for all $t\ge 0$.
This and \eqref{eq:geq} imply that
$$
  F(W(t)) \ge F(W(0)) - O(t^2)  \,, \qquad t\to 0_+ \,,
$$
hence $F(W(t))'_{t=0} \ge 0$.

Similarly, for the Jacobi field $-J$ and $t<0$ we have
$$
F(-J(t)) = -t \cdot F(W(t)) +O(t^3) \,, \qquad t\to 0_- \,,
$$
and therefore $F(-J(t))\ge -t \cdot F(W(0))$ for all $t\le 0$.
These relations imply that
$$
  F(W(t)) \ge F(W(0)) - O(t^2)  \,, \qquad t\to 0_- \,,
$$
hence $F(W(t))'_{t=0} \le 0$.
Thus  $F(W(t))'_{t=0}=0$.

Shifting the parameter of $\gamma$ we deduce that
the derivative of $F(W(t))$ vanishes everywhere on $(a,b)$.
Therefore $F(W(t))$ is constant.
\end{proof}

\section{Linear parallel transport and Berwaldness} \label{sec:ber}

In this section we prove three characterizations of Berwald metrics
in terms of the linear parallel transport.
The proof of Theorems \ref{mainthm} and \ref{thm:conj}
in the next section uses Proposition \ref{prop:1},
which in its turn refers to Proposition \ref{prop:fins}.

\begin{prop} \label{prop:fins}
Let $(M,F)$ be a Finsler manifold. Let $g$ be a smooth Riemannian metric on $M$ such
that the linear parallel transport (determined by~$F$)
along any $F$-geodesic preserves~$g$.
Then  $g$ is affinely equivalent to~$F$,
in particular $F$ is Berwald.
\end{prop}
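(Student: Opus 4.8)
The plan is to show that the Levi-Civita connection $\nabla$ of $g$ has the same geodesics as $F$, which by the standard fact recalled at the end of Section \ref{sub:berwald} gives affine equivalence and hence Berwaldness. The key observation is that the hypothesis ties the linear parallel transport of $F$ along an $F$-geodesic $\gamma$ to a $g$-isometry between tangent spaces, while at the same time Lemma \ref{lem:osculating} identifies the $F$-geodesic data of $\gamma$ with the $g^\gamma$-geodesic data for an osculating metric. I would work pointwise: fix $p\in M$ and $v\in T_pM\setminus\{0\}$, set $\gamma=\gamma_v$, and let $g^\gamma$ be an osculating Riemannian metric for $\gamma$. Then $\gamma$ is a $g^\gamma$-geodesic (Lemma \ref{lem:osculating}) and, by Lemma \ref{lem:prel}, the covariant derivative $D_\gamma$ and hence the linear parallel transport along $\gamma$ coincide with those of $g^\gamma$.

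The heart of the argument is to compare $g$ with $g^\gamma$ along $\gamma$. By hypothesis, $D_\gamma$-parallel fields (equivalently, $g^\gamma$-parallel fields) are also $g$-parallel in the sense that $g$ is preserved along them: if $W_1,W_2$ are $D_\gamma$-parallel along $\gamma$, then $t\mapsto g(W_1(t),W_2(t))$ is constant, since $g(W_i,W_i)$ is constant by assumption and $g(W_1,W_2)$ can be recovered by polarization from norms of linear combinations $W_1+cW_2$, which are again $D_\gamma$-parallel. Consequently, choosing a $D_\gamma$-parallel frame along $\gamma$, the metric $g$ is represented by a constant matrix in that frame, exactly as $g^\gamma$ is (with possibly a different constant matrix). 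This means $\nabla^\gamma$-parallel fields are $\nabla$-parallel along $\gamma$ as well, so the covariant derivatives $\frac{\nabla}{dt}$ and $\frac{\nabla^\gamma}{dt}$ of any vector field along $\gamma$ agree. In particular $\frac{\nabla}{dt}\dot\gamma(t)=\frac{\nabla^\gamma}{dt}\dot\gamma(t)=0$, so $\gamma$ is a geodesic of $g$.

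Since $v$ was an arbitrary nonzero tangent vector, every $F$-geodesic is a $g$-geodesic; as the $F$-geodesics through each point cover all directions, and $g$-geodesics are likewise determined by their initial velocity, the two families of unparametrized geodesics coincide (affine reparametrization is automatic because both are constant-speed in their respective metrics, and the $F$-geodesic $\gamma_v$ has constant $g$-speed by the preservation hypothesis applied to $W=\dot\gamma$). Hence $g$ and $F$ are affinely equivalent, and $F$ is Berwald by the criterion in Section \ref{sub:berwald}. The main obstacle I anticipate is the bookkeeping in the middle step: making precise that ``$g$ is preserved along all $D_\gamma$-parallel fields'' forces $\frac{\nabla}{dt}$ to equal $\frac{\nabla^\gamma}{dt}$ along $\gamma$ — this needs the polarization remark together with the fact that a $D_\gamma$-parallel frame exists and trivializes both the covariant derivative of $g^\gamma$ and (via constancy of the Gram matrix) that of $g$. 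Everything else is a direct application of Lemmas \ref{lem:osculating} and \ref{lem:prel} and the quoted affine-equivalence criterion.
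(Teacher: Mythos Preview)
Your argument has a genuine gap at the step where you pass from ``$g$ is preserved along all $D_\gamma$-parallel fields'' to ``$\nabla^\gamma$-parallel fields are $\nabla$-parallel along $\gamma$''. The first statement says only that $D_\gamma$-parallel transport is a $g$-isometry between tangent spaces. Along a single curve there are many families of $g$-isometries that are not the $\nabla$-parallel transport: if $(E_i)$ is a $D_\gamma$-parallel frame with $g(E_i,E_j)$ constant in $t$, then writing $\tfrac{\nabla}{dt}E_i=\sum_j A_i^j(t)E_j$ and differentiating the Gram matrix only forces $A$ to be skew with respect to that constant Gram matrix, not to vanish. Concretely, on the Euclidean plane with $\gamma(t)=(t,0)$, the frame $E_1(t)=(\cos t,\sin t)$, $E_2(t)=(-\sin t,\cos t)$ has constant Euclidean Gram matrix but is certainly not $\nabla$-parallel. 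So constancy of the Gram matrix along $\gamma$ cannot pin down $\tfrac{\nabla}{dt}$; the Levi-Civita connection of $g$ depends on how $g$ varies transversally to $\gamma$, information your argument never uses. This is not a bookkeeping issue---the implication you are relying on is simply false.

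The paper's proof supplies exactly this missing transversal input by varying the geodesic. One sets $\kappa(v)=\tfrac{\nabla}{dt}\big|_{t=0}\dot\gamma_v(t)$ and first obtains $\langle\kappa(v),v\rangle=0$ from the preservation hypothesis applied to $\dot\gamma_v$ (this much your argument also gives). The new ingredient is the variational identity
\[
\frac{d}{ds}\Big|_{s=0}\kappa(v+sw)=2\,\frac{\nabla}{dt}\Big|_{t=0}W(t),
\]
where $W$ is the $D_\gamma$-parallel field with $W(0)=w$; this comes from $J(t)=tW(t)+O(t^3)$ in Lemma~\ref{lem:jw}. Differentiating $\langle\kappa(v+sw),v+sw\rangle=0$ in $s$ and comparing with $\langle W'(0),v\rangle+\langle w,\kappa(v)\rangle=0$ (preservation applied to the pair $W,\dot\gamma_v$) yields $\langle\kappa(v),w\rangle=0$ for every $w$, hence $\kappa(v)=0$. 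The essential point is that the hypothesis is invoked along the whole family $\gamma_{v+sw}$, not just along $\gamma_v$; your proposal never leaves the single geodesic, and that is why it cannot close.
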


\begin{proof}
Let $\nabla$ denote the Levi-Civita connection induced by~$g$.
For $p\in M$ and $v\in T_pM\setminus\{0\}$ define
 $\kappa(v)\in T_pM$ by
$$
 \kappa(v) = \frac{\nabla}{dt} \Big|_{t=0} \dga_v(t)
$$
where $\gamma_v$ is, as usual, the $F$-geodesic with $\dga_v(0)=v$.
That is, $\kappa(v)$ is the Riemannian second derivative of~$\gamma_v$.
Note that $\kappa(v)$ depends smoothly on~$v$.  We need to show that
every geodesic of the Finsler manifold $(M,F)$ is a geodesic in $(M,g)$. Thus,
it suffices to prove that $\kappa(v)=0$ for all $p$ and~$v$.

Let $W$ be a linearly parallel vector field
along $\gamma_v$ and $w=W(0)$.
We claim that
\begin{equation} \label{eq:der}
 \frac d {ds}  \Big | _{s=0} \kappa(v+sw) = 2\cdot \frac {\nabla} {dt} \Big |_{t=0} W(t)\; .
\end{equation}
To prove this,
we use the Riemannian exponential coordinates with respect to~$g$ at~$p$
to identify a small neighborhood $U$ of $p$ with an open subset of $\R^n=T_pM$.
By means of coordinates we also identify vector fields along $\gamma_v$
with $\R^n$-valued functions of~$t$.
Recall that the Levi-Civita connection coefficients in exponential coordinates
vanish at the origin.
Hence in these coordinates we have
\be\label{eq:kappa-coord}
 \kappa(v) = \frac {d^2}{dt^2} \Big |_{t=0} \gamma_v(t)
\ee
and
$$
 \frac{\nabla}{dt} \Big |_{t=0} W(t) = \frac{d}{dt} \Big |_{t=0} W(t) .
$$
Differentiating \eqref{eq:kappa-coord} yields
$$
 \frac d {ds}  \Big | _{s=0} \kappa(v+sw)
 = \frac {\partial^3} {\partial s\partial t^2}  \Big | _{(s,t)=(0,0)} \gamma _{v+sw} (t)
 = \frac {d^2} {dt^2}  \Big  |_{t=0}  J(t)
$$
where $J$ is the Jacobi field along $\gamma_v$ defined by \eqref{eq:jw-def},
see Lemma \ref{lem:jw}.
By the relation $J(t)= t\cdot W(t) + O(t^3)$ from Lemma \ref{lem:jw},
the right-hand side can be rewritten as
$$
 \frac {d^2} {dt^2}  \Big  |_{t=0}  J(t)
 =\frac {d^2} {dt^2}  \Big  |_{t=0} (t\cdot W(t))
 = 2 \cdot \frac{d}{dt} \Big |_{t=0} W(t)
 = 2\cdot \frac{\nabla}{dt} \Big |_{t=0} W(t) \,.
$$
This finishes the proof of \eqref{eq:der}.

\medskip

Now we forget about coordinates and introduce some shortcut notation.
We write $\langle \cdot,\cdot\rangle$ instead of $g(\cdot,\cdot)$
and denote by $X'(t)$ the Levi-Civita derivative (induced by~$g$)
of a vector field $X(t)$ along a path.

Since $g$ is preserved by the linear parallel transport,
for any linearly parallel vector fields $X$ and $Y$ along $\gamma _v$,
the inner product $\langle X(t),Y(t)\rangle$ is constant, hence
\begin{equation} \label{eq:scalar}
  0 = \frac d{dt} \langle X(t),Y(t)\rangle
  =\langle X',Y\rangle + \langle X,Y' \rangle \,.
\end{equation}
Note that $\dga_v$ is linearly parallel along $\gamma_v$.
For  $X=Y=\dga_v$ and $t=0$ the identity \eqref{eq:scalar} boils down to
\begin{equation} \label{eq:v}
 \langle \kappa(v), v \rangle = 0 .
\end{equation}
Fix $w\in T_pM$ and let $W$ be the linearly parallel vector field along $\gamma _v$ with $W(0)=w$.
We replace  $v$ by $v+sw$ in \eqref{eq:v} and differentiate with respect to~$s$
using \eqref{eq:der}:
$$
  0=\frac d{ds}  \Big |_{s=0} \langle \kappa(v+sw), v+sw \rangle
  = \langle 2\cdot W'(0), v \rangle +  \langle \kappa(v), w\rangle \,.
$$
On the other hand, plugging  $X=W$ and $Y=\dga_v$ into \eqref{eq:scalar}
we see that
$$
 0 = \langle W'(0), v \rangle + \langle w, \kappa(v) \rangle \, .
$$
The last two equations together imply that $\langle \kappa(v), w\rangle =0$.
Since $w$ was arbitrary in $T_pM$, we conclude that $\kappa(v)=0$.
This finishes the proof of the proposition.
\end{proof}

We now deduce the main ingredient of the proofs of Theorems \ref{mainthm} and \ref{thm:conj}.

\begin{prop} \label{prop:1}
Let $(M,F)$ be a Finsler manifold.
Suppose that the linear parallel transport along any geodesic preserves
the Finsler norm $F$.
Then there exists a Riemannian metric on $M$ affinely equivalent to $F$.
In particular $F$ is Berwald.
\end{prop}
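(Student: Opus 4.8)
The plan is to reduce everything to Proposition \ref{prop:fins}: it suffices to produce a smooth Riemannian metric $g$ on $M$ that is invariant under the linear parallel transport along every $F$-geodesic, for then $g$ is affinely equivalent to $F$ and in particular $F$ is Berwald. Since the conclusion is local and a connected manifold is chained by geodesics, we may assume $M$ is connected. Fix a base point $p$ and let $H_p\subset\mathrm{GL}(T_pM)$ be the linear holonomy group at $p$, namely the group generated by the linear parallel transport operators $P_\sigma$ over all piecewise-geodesic loops $\sigma$ based at $p$; here one uses that the linear parallel transport from $\gamma(a)$ to $\gamma(b)$ along a geodesic $\gamma$ is an isomorphism whose inverse is the transport from $\gamma(b)$ to $\gamma(a)$ along the same $\gamma$, so that these operators indeed close up to a group (note that this avoids reversing $\gamma$ as a path, which need not be an $F$-geodesic). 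By hypothesis each $P_\sigma$ preserves the Finsler norm $F$ on $T_pM$, hence it preserves the convex body $B_p=\{v\in T_pM:F(v)\le 1\}$.

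Because $B_p$ is a bounded convex neighbourhood of the origin, every subgroup of $\mathrm{GL}(T_pM)$ preserving $B_p$ is bounded, together with the inverses of its elements: a fixed small ball around $0$ is carried into $B_p$ and $B_p$ into a fixed large ball, which bounds the operator norms and keeps the determinants away from $0$. Hence the closure $\overline{H_p}$ is a compact subgroup of $\mathrm{GL}(T_pM)$. Averaging an arbitrary inner product on $T_pM$ over $\overline{H_p}$ with respect to Haar measure yields an inner product $\langle\cdot,\cdot\rangle_p$ on $T_pM$ invariant under $H_p$. Now define $g$ on $M$ by transporting it: for $q\in M$ and a piecewise-geodesic path from $p$ to $q$ with linear parallel transport $P\colon T_pM\to T_qM$, set $g_q(X,Y)=\langle P^{-1}X,P^{-1}Y\rangle_p$. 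This is independent of the chosen path, since two such paths differ by a loop at $p$ whose transport lies in $H_p$ and therefore preserves $\langle\cdot,\cdot\rangle_p$; and by construction $g$ is preserved by the linear parallel transport along every piecewise-geodesic path, in particular along every $F$-geodesic.

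It remains to verify that $g$ is smooth, and this is the main technical point. In a convex neighbourhood of an arbitrary $q_0\in M$ the value $g_q$ is obtained from $g_{q_0}$ by the linear parallel transport along the unique geodesic from $q_0$ to $q$; since geodesics and the associated osculating Riemannian metrics depend smoothly on their initial conditions (Section \ref{sec:prel}), the corresponding covariant derivative $D_\gamma$ and hence this transport depend smoothly on $q$, so $g$ is smooth near $q_0$. With $g$ now a genuine smooth Riemannian metric preserved by the linear parallel transport along all $F$-geodesics, Proposition \ref{prop:fins} applies and shows that $g$ is affinely equivalent to $F$; in particular $F$ is Berwald.
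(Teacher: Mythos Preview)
Your argument is correct in outline and reduces the problem to Proposition~\ref{prop:fins} exactly as the paper does, but the route to the auxiliary Riemannian metric $g$ is different. The paper defines $g_p$ \emph{pointwise and canonically} from the norm $F_p$ (for instance via the Binet--Legendre construction of \cite{Matveev}): any linear isometry $(T_pM,F_p)\to(T_qM,F_q)$ then carries $g_p$ to $g_q$, so invariance of $g$ under the linear parallel transport is automatic from the hypothesis, and smoothness of $g$ comes for free from smoothness of $F$. Your approach instead shows that the linear holonomy group is relatively compact (because it preserves the unit ball of $F_p$), averages to obtain an invariant inner product at one point, and propagates it by parallel transport. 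This is precisely the mechanism the paper uses later, in the proof of Proposition~\ref{prop:2}; so you are in effect deducing Proposition~\ref{prop:1} as a special case of Proposition~\ref{prop:2}. Both routes are valid; the paper's pointwise construction is shorter and sidesteps the holonomy discussion entirely, while your argument has the advantage of not invoking an external reference for the canonical metric.

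There is one genuine (though easily repaired) gap in your smoothness paragraph. You argue that $g_q$ is obtained from $g_{q_0}$ by linear parallel transport along the geodesic from $q_0$ to $q$, and that this depends smoothly on $q$ because ``geodesics and the associated osculating Riemannian metrics depend smoothly on their initial conditions''. But the initial velocity $v(q)=\exp_{q_0}^{-1}(q)$ tends to $0$ as $q\to q_0$, and the osculating data are smooth only on $TM\setminus\{0\}$: the connection coefficients along $\gamma_v$ depend on the \emph{direction} $v/|v|$, which has no limit at $q=q_0$. (This is the same phenomenon that makes the Finsler exponential map only $C^1$ at the origin in general.) Your argument therefore establishes smoothness of $g$ on the punctured neighbourhood, not at $q_0$ itself. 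The fix is immediate: choose any auxiliary point $q_1\ne q_0$ in the convex neighbourhood and transport from $q_1$ instead; now the initial velocities stay away from zero for all $q$ near $q_0$, and smoothness at $q_0$ follows. With this adjustment your proof is complete.
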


\begin{proof}
Consider a smooth Riemannian metric $g$ on $M$
whose value $g_p$ at every $p\in M$
is canonically determined by the norm $F_p:=F|_{T_pM}$,
where ``canonically'' means that
any linear isometry between normed spaces
$(T_pM,F_p)$ and $(T_qM,F_q)$ takes $g_p$ to~$g_q$.
See \cite{Matveev} for a possible construction of such~$g$.

Since the linear parallel transports are linear and preserve $F$,
they also preserve~$g$.  The claim now follows from
Proposition \ref{prop:fins}.
\end{proof}

Our next result generalizes Proposition \ref{prop:1}.
In order to state it we need the notion of linear holonomy group,
defined as follows.
Let $(M,F)$ be a Finsler manifold.
For a piecewise geodesic path $\gamma:[a,b]\to M$ denote by $P_\gamma$ the
linear parallel transport along $\gamma$.
Recall that $P_\gamma$ is a linear isomorphism from $T_{\gamma(a)}M$ to $T_{\gamma(b)}M$.
Fix a point $p\in M$ and let $GL(T_pM)$ denote the group of
linear self-isomorphisms of $T_pM$.
The \emph{linear holonomy group} $LH_p(M,F)$ is the subgroup of $GL(T_pM)$
generated by maps of the form $P_{\gamma_2}^{-1}\circ P_{\gamma_1}$
where $\gamma_1$ and $\gamma_2$ are piecewise geodesic paths
starting at $p$ and having a common end point.
(The definition is so cumbersome because linear parallel transports
in opposite directions are in general not inverse to each other.)

\begin{prop} \label{prop:2}
Let $(M,F)$ be a connected Finsler manifold and $p\in M$.
Then the closure of the linear holonomy group $LH_p(M,F)$ in $GL(T_pM)$
is compact if and only if $F$ is Berwald.
\end{prop}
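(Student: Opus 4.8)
The plan is to prove both directions by reducing to the characterizations already in hand, especially Proposition~\ref{prop:1} and the standard fact that Berwald metrics admit a compatible affine connection (equivalently, are affinely equivalent to a Riemannian metric). For the ``if'' direction, suppose $F$ is Berwald. Then by Szab\'o's theorem $F$ is affinely equivalent to some Riemannian metric $h$, so $F$-geodesics are $h$-geodesics up to affine reparametrization and, as noted after Lemma~\ref{lem:prel}, the linear covariant derivative $D_\gamma$ of $F$ along any geodesic $\gamma$ coincides with the Levi-Civita covariant derivative of $h$. Consequently the linear parallel transport of $F$ along piecewise geodesic paths is exactly the Levi-Civita parallel transport of $h$, so $LH_p(M,F)$ is contained in the Riemannian holonomy group of $(M,h)$ at $p$. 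The latter is a subgroup of the orthogonal group $O(T_pM,h_p)$, which is compact; hence the closure of $LH_p(M,F)$ is compact.

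For the ``only if'' direction, assume the closure $G$ of $LH_p(M,F)$ in $GL(T_pM)$ is compact. The idea is to average an arbitrary inner product over $G$ to produce a $G$-invariant one, then transport it around to get a global Riemannian metric $g$ preserved by all linear parallel transports, and finally invoke Proposition~\ref{prop:fins}. Concretely: pick any inner product $\langle\cdot,\cdot\rangle_0$ on $T_pM$ and set $g_p = \int_G A^*\langle\cdot,\cdot\rangle_0\, d\mu(A)$ where $\mu$ is the Haar probability measure on the compact group $G$; then $g_p$ is positive definite and invariant under $G$, hence under $LH_p(M,F)$. To spread $g_p$ to all of $M$, fix $q\in M$, choose a piecewise geodesic path $\gamma$ from $p$ to $q$, and define $g_q$ as the pushforward of $g_p$ under $P_\gamma$. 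This is well defined: if $\gamma'$ is another such path, then $P_{\gamma'}\circ P_\gamma^{-1}$ differs from an element of the holonomy group only by the orientation subtlety built into the definition of $LH_p$, and one checks that $P_{\gamma'}^{-1}\circ P_\gamma$ and its inverse both lie in $LH_p(M,F)$, so $g_p$-invariance under $LH_p(M,F)$ forces $P_\gamma$ and $P_{\gamma'}$ to yield the same $g_q$. By construction every linear parallel transport along every $F$-geodesic preserves $g$.

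It remains to check that $g$ so defined is smooth and that $F$ is Berwald. Smoothness should follow because, locally, $g$ can be obtained by parallel transporting $g_p$ along the radial geodesics of $F$ from a fixed point, and the linear parallel transport depends smoothly on the geodesic (which in turn depends smoothly on its initial velocity, as recorded in Section~\ref{sec:prel}); alternatively one can note that $g$ is characterized pointwise by $\nabla^{\mathrm{lin}} g = 0$ along geodesics, a condition that pins down $g$ up to the smooth choice made at $p$. Once $g$ is a genuine smooth Riemannian metric whose linear parallel transport (determined by $F$) preserves it, Proposition~\ref{prop:fins} yields that $g$ is affinely equivalent to $F$ and hence $F$ is Berwald, completing the proof.

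I expect the main obstacle to be the smoothness of the averaged metric $g$ and the careful handling of the non-symmetry of linear parallel transport in the well-definedness argument. The averaging over the \emph{closure} $G$ rather than $LH_p(M,F)$ itself is what makes the Haar-measure argument go through, but one must then confirm that the resulting $g_p$ is still invariant under the (a priori non-closed) holonomy group --- this is automatic since $LH_p(M,F)\subset G$. The delicate point is purely notational-algebraic: translating ``common endpoint of two piecewise geodesics'' into the statement that the relevant transition maps lie in $LH_p(M,F)$, so that the global metric is unambiguous.
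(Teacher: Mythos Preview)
Your proposal is correct and follows essentially the same approach as the paper: for the ``if'' direction both invoke Szab\'o's theorem to identify the linear parallel transport with a Levi-Civita transport and hence place $LH_p$ inside an orthogonal group; for the ``only if'' direction both use compactness of the closure to obtain an invariant inner product on $T_pM$, spread it by linear parallel transport to a well-defined Riemannian metric $g$, note smoothness, and apply Proposition~\ref{prop:fins}. Your write-up is simply more explicit about the Haar averaging and the well-definedness check, and the paper, like you, leaves smoothness as an easy observation.
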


\begin{proof}
Let $H=LH_p(M,F)$.
If $F$ is Berwald then by Szab\'o's metrization theorem
the linear parallel transport is the Levi-Civita
parallel transport of some Riemannian metric.
Thus $H$ preserves an inner
product on $T_pM$ and is therefore contained
in the corresponding orthogonal group, which is compact.

On the other hand, if the closure of $H$ is compact then
there exists an $H$-invariant inner product $g_p$ on $T_pM$.
For every $x\in M$, pick a piecewise geodesic $\gamma$ connecting $p$ to~$x$
and define an inner product $g_x$ on $T_xM$ as the push-forward of $g_p$ by
the linear parallel transport $P_\gamma$.
Since $g_p$ is $H$-invariant, the resulting Riemannian metric $g=\{g_x\}_{x\in M}$
is well-defined and it is preserved by the linear parallel transport along any $F$-geodesic.
It is easy to see that $g$ is smooth.
By Proposition \ref{prop:fins} it follows  that $F$ is a Berwald metric.
\end{proof}

\section{Proof of the theorems}\label{sec:proof}

We can now collect the harvest and prove
Theorems \ref{mainthm} and \ref{thm:conj}.

The ``if'' part of Theorem \ref{thm:conj} follows from the main result of \cite{KVK}.
To prove the ``if'' part of Theorem \ref{mainthm},
let $g$ be a Riemannian metric on $M$ of nonpositive sectional curvature
whose Levi-Civita connection preserves the Finsler norm~$F$.
Then $F$ is Berwald and affinely equivalent to~$g$
(see Section \ref{sub:berwald}).
Now Lemma \ref{lem:nonpos} implies that $F$
has nonpositive flag curvature.
Hence, by the ``if'' part of Theorem~\ref{thm:conj},
$(M,F)$ is Busemann NPC.

Now we prove the ``only if'' implications of the theorems.
Let $(M,F)$ be a Busemann NPC Finsler manifold.
Then by Lemma \ref{lem:first} the linear parallel
transport preserves~$F$.
Therefore, by Proposition \ref{prop:1} the Finsler metric $F$ is
Berwald and affinely equivalent to a Riemannian metric $g$.

It remains to prove that $g$ has nonpositive sectional curvature
and $F$ has nonpositive flag curvature.
This can be seen from results of \cite{KK}
but for reader's convenience we give a short proof here.

First let us show that $F$ has nonpositive flag curvature.
Suppose the contrary.
Then, as explained in Section~\ref{sub:flag},
for some geodesic $\gamma$ the Jacobi operator $R^\gamma$
has a positive eigenvalue at $p=\gamma(0)$.
Let $w\in T_pM$ be a nonzero vector such that
$R^\gamma(w)=\lambda w$, $\lambda>0$.
Consider the Jacobi field $J$ along $\gamma$ with the initial conditions
$J(0)=w$ and $D_\gamma J(0)=0$.
Let $\overline J(t)\in T_pM$ be the image of $J(t)$ in $T_pM$ under the
linear parallel transport along $\gamma_v$.
Then $\overline J'(0)=D_\gamma J(0)=0$ and
$$
 \overline J''(0)= D_\gamma D_\gamma J(0) = -R^\gamma(J(0))=-\lambda w \,.
$$
Since the linear parallel transport preserves $F$, we have $F(J(t))=F(\overline J(t))$ for all~$t$.
Therefore
$$
 F(J(t))''_{t=0} = F(\overline J(t))''_{t=0} = -\lambda\cdot F(w) < 0 \,,
$$
contrary to the fact that $F(J(t))$ is a convex function (see the proof of Lemma~\ref{lem:first}).
This contradiction shows that $F$ has nonpositive flag curvature
and finishes the proof of Theorem \ref{thm:conj}.

Now Lemma \ref{lem:nonpos} and the affine equivalence of $F$ and $g$ imply
that $g$ has nonpositive sectional curvature.
This finishes the proof of Theorem~\ref{mainthm}.

\section{Non-smooth examples}\label{sec:bicombing}
 In this section we describe two simple examples of Busemann convex spaces not affinely equivalent to any CAT(0) space.
Both examples are constructed from a plane $V$ with a strictly convex non-Euclidean norm.

The first example $X_1$ arises from $V$ by attaching a ray  at the origin.

The second example $X_2$ is the double branched cover of $V$ with branching locus the origin $0\in V$. We give $X_2$  the naturally induced length metric.
Note that $X_2$ is biLipschitz to  the Euclidean plane.

It is not difficult to see that $X_1$ and $X_2$ are Busemann convex. Assume now that there exists a space $Z$ and an affine  bijection $F:X_i  \to Z$
for $i=1$ or $i=2$, hence $F$ sends constant speed geodesics to constant speed geodesics.

Every ray in $V$ starting at the origin can be continued in $X_1$ to an infinite geodesic by the attached ray in $X$. Thus, in the case of $X_1$, any  ray in $V$ is stretched by the same
factor   as the attached ray. Hence all rays in $V$ through the origin are stretched by the same factor. The metric on the punctured plane $Z_0=F(V\setminus \{0 \}  )$
is affinely equivalent to the flat Riemannian manifold $\R^2\setminus \{ 0\}$.   Due to  \cite[Theorem 1.5]{Lyt}, the metric on $Z_0$ must come from a constant norm on $V$.
In particular,  parallel rays in $V$ must be stretched by $F$ by equal factors.
 We conclude that
 $F$ must stretch all distances by the same factor.
Thus, $F$ is a dilation and $Z$  cannot be CAT(0).

Every pair of rays $\gamma_{1,2}$ in $X_2$ starting at the origin  can be continued to infinite  geodesics by the same ray $\gamma _3$. Therefore, also in case of $X_2$,  the map  $F$ must stretch all rays through the origin by the same factor.  Using  \cite[Theorem 1.5]{Lyt} as above,
 this again implies that $F$ is just a dilation and $Z$ cannot be CAT(0).

However,  in both cases there is a natural CAT(0) metric on the spaces $X_1$, $X_2$ which arises in the same way as $X_{1,2}$  from the Euclidean plane $V_0$ instead of $V$.
These CAT(0) metrics have the same unparametrized geodesics as the original metrics  and are affinely equivalent to the original metric outside the branching locus.

\bibliographystyle{alpha}
\bibliography{busemnpc}

\begin{thebibliography}{KVK04}

\bibitem[Bal95]{Ballmann}
W.~Ballmann.
\newblock {\em Lectures on spaces of nonpositive curvature}, volume~25 of {\em
  DMV Seminar}.
\newblock Birkh\"auser Verlag, Basel, 1995.
\newblock With an appendix by M. Brin.

\bibitem[BCS00]{BCS}
D.~Bao, S.-S. Chern, and Z.~Shen.
\newblock {\em An introduction to {R}iemann-{F}insler geometry}, volume 200 of
  {\em Graduate Texts in Mathematics}.
\newblock Springer-Verlag, New York, 2000.

\bibitem[Ber55]{Berger}
M.~Berger.
\newblock Sur les groupes d'holonomie homog\`ene des vari\'et\'es \`a connexion
  affine et des vari\'et\'es riemanniennes.
\newblock {\em Bull. Soc. Math. France}, 83:279--330, 1955.

\bibitem[BH99]{BH}
M.~Bridson and A.~Haefliger.
\newblock {\em Metric spaces of non-positive curvature}, volume 319 of {\em
  Grundlehren der Mathematischen Wissenschaften [Fundamental Principles of
  Mathematical Sciences]}.
\newblock Springer-Verlag, Berlin, 1999.

\bibitem[Bus48]{busemann48}
H.~Busemann.
\newblock Spaces with non-positive curvature.
\newblock {\em Acta Math.}, 80:259--310, 1948.

\bibitem[CS05]{Chern}
S.~Chern and Z.~Shen.
\newblock {\em Riemann-{F}insler geometry}, volume~6 of {\em Nankai Tracts in
  Mathematics}.
\newblock World Scientific Publishing Co. Pte. Ltd., Hackensack, NJ, 2005.

\bibitem[KK06]{KK}
A.~Krist\'aly and L.~Kozma.
\newblock Metric characterization of {B}erwald spaces of non-positive flag
  curvature.
\newblock {\em J. Geom. Phys.}, 56(8):1257--1270, 2006.

\bibitem[KLP16]{KLP}
M.~Kapovich, B.~Leeb, and J.~Porti.
\newblock Finsler bordification of symmetric and certain locally symmetric
  spaces.
\newblock Preprint, arXiv:1505.03593, 2016.

\bibitem[KR14]{KristalyR}
A.~Krist\'aly and A.~Roth.
\newblock Testing metric relations on {Finsler} manifolds via a geodesic
  detecting algorithm.
\newblock Conference Proceedings of the 9th IEEE International Symposium on
  applied Computaional Intelligence and Informatics, 2014.

\bibitem[KS68]{Kelly}
P.~Kelly and E.~G. Straus.
\newblock Curvature in {H}ilbert geometries. {II}.
\newblock {\em Pacific J. Math.}, 25:549--552, 1968.

\bibitem[KVK04]{KVK}
A.~Krist\'aly, C~Varga, and L.~Kozma.
\newblock The dispersing of geodesics in {B}erwald spaces of non-positive flag
  curvature.
\newblock {\em Houston J. Math.}, 30(2):413--420, 2004.

\bibitem[Lan13]{Langhyp}
U.~Lang.
\newblock Injective hulls of certain discrete metric spaces and groups.
\newblock {\em J. Topol. Anal.}, 5(3):297--331, 2013.

\bibitem[Lyt12]{Lyt}
A.~Lytchak.
\newblock Affine images of {R}iemannian manifolds.
\newblock {\em Math. Z.}, 270(3-4):809--817, 2012.

\bibitem[MN17]{Muzsnay}
Z.~Muzsnay and P.~Nagy.
\newblock Holonomy {T}heory of {F}insler {M}anifolds.
\newblock In {\em Lie Groups, Differential Equations, and Geometry: Advances
  and Surveys}, pages 265--320. Springer International Publishing, 2017.

\bibitem[MT12]{Matveev}
V.~S. Matveev and M.~Troyanov.
\newblock The {B}inet-{L}egendre metric in {F}insler geometry.
\newblock {\em Geom. Topol.}, 16(4):2135--2170, 2012.

\bibitem[Pap14]{Papa}
A.~Papadopoulos.
\newblock {\em Metric spaces, convexity and non-positive curvature}, volume~6
  of {\em IRMA Lectures in Mathematics and Theoretical Physics}.
\newblock European Mathematical Society (EMS), Z\"urich, second edition, 2014.

\bibitem[Rad04]{Rad}
H.~Rademacher.
\newblock Nonreversible {F}insler metrics of positive flag curvature.
\newblock In {\em A sampler of {R}iemann-{F}insler geometry}, volume~50 of {\em
  Math. Sci. Res. Inst. Publ.}, pages 261--302. Cambridge Univ. Press,
  Cambridge, 2004.

\bibitem[She01]{Sh01}
Z.~Shen.
\newblock {\em Differential geometry of spray and {F}insler spaces}.
\newblock Kluwer Academic Publishers, Dordrecht, 2001.

\bibitem[She09]{Shenproblem}
Z.~Shen.
\newblock Some open problems in {Finsler} geometry.
\newblock Preprint,
  https://www.math.iupui.edu/{$\sim$}zshen/Research/papers/Problem.pdf, 2009.

\bibitem[Sza81]{Szabo}
Z.~I. Szab\'o.
\newblock Positive definite {B}erwald spaces. {S}tructure theorems on {B}erwald
  spaces.
\newblock {\em Tensor (N.S.)}, 35(1):25--39, 1981.

\end{thebibliography}

\end{document}